 \newtheorem{theorem}{Theorem}[section]
 \newtheorem{lemma}[theorem]{Lemma}
 \newtheorem{corollary}[theorem]{Corollary}
\theoremstyle{remark}
\newtheorem{remark}[theorem]{Remark}
\renewcommand\le{\leqslant}
\renewcommand\ge{\geqslant}
\newcounter{followUpMarkerA}
\newcounter{followUpMarkerB}
\def\followups{ }
\newcommand{\followupAdd}[1]{\expandafter\def\expandafter\followups\expandafter{\followups{}\item \stepcounter{followUpMarkerB} Page \pageref{followupmarker\thefollowUpMarkerB}: #1}}
\title[A family of hemisystems on the parabolic quadrics]{A family of hemisystems on the parabolic quadrics}
\author{Jesse Lansdown}
\author{Alice C. Niemeyer}
\email{Jesse.Lansdown@research.uwa.edu.au}
\email{Alice.Niemeyer@mathb.rwth-aachen.de}
\address[Lansdown]{Centre for the Mathematics of Symmetry of Computation\\
School of Mathematics and Statistics\\
University of Western Australia\\
Perth WA\\
Australia}
\address[Lansdown, Niemeyer]{
Lehrstuhl B f\"ur Mathematik\\
Lehr- und Forschungsgebiet Algebra\\
RWTH Aachen\\
Pontdriesch 10-16\\
52062 Aachen\\
Germany}
\thanks{The first author acknowledges the support of an Australian Government Research Training Program Scholarship and a UWA Top-Up Scholarship. The second author acknowledges the support the Australian Research Council Discovery Grant DP190100450. We would also like to thank John Bamberg and Michael Giudici for their helpful discussions. The foundation of this research occurred during the first author's time at the RWTH}
\begin{document}
\maketitle

\begin{abstract}
We constuct a family of hemisystems of the parabolic quadric $\mathcal{Q}(2d, q)$, for all ranks $d \ge 2$ and all odd prime powers $q$, that admit $\Omega_3(q) \cong \mathrm{PSL}_2(q)$. This yields the first known construction for $d \ge 4$.
\end{abstract}

\section{Introduction}

Let $\mathcal{S} = (\mathcal{P}, \mathcal{M}, I)$ be an incidence structure with \emph{points} $\mathcal{P}$ and \emph{maximals} $\mathcal{M}$. We say that $\mathcal{S}$ has order $(s,t)$ if there are $s+1$ points incident with every maximal, and $t+1$ maximals incident with every point. A \emph{hemisystem} is a subset $H$ of $\mathcal{M}$ such that every point is incident with $\frac{t+1}{2}$ maximals of $H$ (thus requiring that $t$ is odd). A hemisystem $H$ is said to ``admit'' a group $B$ if $B$ is isomorphic to a subgroup of the stabiliser of $H$ in the automorphism group of $\mathcal{S}$.

Hemisystems have connections to other objects in geometry, graph theory, and coding theory. In particular, they often induce new objects such as partial quadrangles, strongly regular or distance regular graphs, and association schemes \cite{CameronDelsarteGoethals, Dam_Martin_Muzychuk_2013}.

A parabolic quadric of rank $d$ is constructed by taking the totally singular subspaces of a $(2d+1)$-dimensional vector space over $\mathbb{F}_q$ under a quadratic form, and is denoted by $\mathcal{Q}(2d, q)$. The parabolic quadrics will be defined more explicitly along with their connections to the orthogonal group in Section \ref{Background}.

Vanhove showed that an $\frac{s+1}{2}$-ovoid of certain dual polar spaces yield new distance regular graphs with classical parameters \cite{Vanhove_2011}, and in
\cite{BambergLansdownLee}
it was shown that the only $m$-ovoids of $\mathcal{DQ}(2d, q)$, $\mathcal{DH}(2d-1, q^2)$, and $\mathcal{DW}(2d-1, q)$, for $d\ge 3$, are $\frac{s+1}{2}$-ovoids. Moreover, an $\frac{s+1}{2}$-ovoid in $\mathcal{DQ}(2d, q)$, $\mathcal{DH}(2d-1, q^2)$, or $\mathcal{DW}(2d-1, q)$ is a hemisystem of $\mathcal{Q}(2d, q)$, $\mathcal{H}(2d-1, q^2)$, or $\mathcal{W}(2d-1, q)$, respectively. 
Recently, Cossidente and Pavese found an infinite family of hemisystems of $\mathcal{Q}(6,q)$, $q$ odd, admitting $\mathrm{PSL}_2(q^2)$ \cite{COSSIDENTE2016112}. This is currently the only known family of hemisystems of the parabolic quadrics, for $d \ge 3$. For $d=2$, hemisystem constructions have been found by Feng et al. \cite{FENG2016} as well as by Cossidente et al. \cite{Cossidente_Culbert_Ebert_Marino_2008}. Hence prior to Theorem \ref{main} of this paper no families of hemisystems of $\mathcal{Q}(2d,q)$ were known for $d \ge 4$.

In this paper we construct a new infinite family of hemisystems of $\mathcal{Q}(2d, q)$ for $d \geq 2$ and $q$ odd.

\begin{theorem}\label{main}
There exist $2^n$ hemisystems of $\mathcal{Q}(2d, q)$ admitting $\mathrm{\Omega}_3(q) \cong \mathrm{PSL}_2(q)$ for all odd prime powers $q$ and all $d \ge 2$, where $n$ is the number of orbits of $\mathrm{\Omega}_3(q)$ on the maximals.
\end{theorem}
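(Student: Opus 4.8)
The plan is to realise $\Omega_3(q)$ explicitly inside $\Omega_{2d+1}(q)$, describe how it partitions the maximals, and then assemble the hemisystems orbit by orbit. Concretely, I would write the ambient orthogonal space as $V = U \perp W$ with $\dim U = 3$ and $\dim W = 2d-2$, so that $(U,Q|_U)$ is a conic $\mathcal C \cong \mathcal Q(2,q)$ with $\Omega(U) \cong \mathrm{PSL}_2(q)$ and $(W,Q|_W)$ is hyperbolic of rank $d-1$, and let $\Omega_3(q) := \Omega(U)$ act as $\Omega(U)$ on $U$ and trivially on $W$; this puts $\Omega_3(q)$ into $\Omega_{2d+1}(q) \le \mathrm{Aut}(\mathcal Q(2d,q))$.

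Next I would compute the orbits of $\Omega_3(q)$ on $\mathcal M$. Since $\Omega_3(q)$ fixes $W$ pointwise, $M \cap W$ is an orbit invariant of a maximal $M$, and projecting $M$ onto $U$ along $W$ forces $\dim(M \cap W)$ to be $d-1$, $d-2$, or $d-3$. Stratifying by this, in the top stratum $\dim(M\cap W) = d-1$ the space $N := M \cap W$ is a maximal of $\mathcal Q(W)$; the $q+1$ maximals $M$ with $M \cap W = N$ are precisely the preimages of the singular points of the rank-$1$ residue $N^\perp/N \cong U$, i.e.\ of $\mathcal C$, and $\Omega_3(q)$ permutes them $2$-transitively, as $\mathrm{PSL}_2(q)$ on $\mathrm{PG}(1,q)$. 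Hence the top stratum is a disjoint union of orbits $O_N$, one ``conic of maximals'' per maximal $N$ of $\mathcal Q(W)$; the lower strata are described the same way, recording $M \cap W$ together with the secant/tangent/external position of the image of $M$ relative to $\mathcal C$. This yields the full list $O_1,\dots,O_n$.

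The heart of the proof is then the incidence analysis. For each orbit $O_i$ and point $P$ I would compute $a_i(P) = \#\{M \in O_i : P \le M\}$; for a top-stratum orbit $O_N$ one gets $a(P) = q+1$, $1$, or $0$ according as $P \le N$, $P \le N^\perp\setminus N$, or $P \not\le N^\perp$, and always $\sum_i a_i(P) = t+1$. The claim to establish is that one may retain a suitably chosen ``half'' of each orbit so that every point lies on exactly $\frac{t+1}{2}$ of the retained maximals, with the $n$ choices independent, and that each of the $2^n$ resulting hemisystems is stabilised by a copy of $\mathrm{PSL}_2(q) \cong \Omega_3(q)$. Note that this copy cannot always be the chosen $\Omega(U)$: only $2^n$ subsets of $\mathcal M$ are $\Omega(U)$-invariant, and $\emptyset$ and $\mathcal M$ are among them, so not all $2^n$ hemisystems can be $\Omega(U)$-invariant — part of the work is to exhibit, for each output, the copy of $\Omega_3(q)$ it admits.

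The step I expect to be the main obstacle is exactly this last one: determining which families of ``half-orbits'' are mutually consistent, checking that the incidence counts $a_i(P)$ come out uniformly over \emph{all} points — in particular over the degenerate points lying in $U$, in $W$, or on the cones over $\mathcal C$ and over $\mathcal Q(W)$, where exceptional behaviour is most likely to intrude — and matching each of the $2^n$ configurations with the copy of $\Omega_3(q)$ in its stabiliser.
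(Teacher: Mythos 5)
Your proposal sets up the right kind of configuration (a conic plane carrying $\mathrm{\Omega}_3(q)\cong\mathrm{PSL}_2(q)$ acting trivially on its perp), but it stops exactly where the proof has to begin, and this is a genuine gap rather than a difference of route. The paper never performs the direct incidence computation you outline; its entire mechanism is the AB-Lemma (Lemma \ref{lem:AB}): one adjoins the involution $\tau$ negating the radical vector $z$ of the conic plane, sets $B=\mathrm{\Omega}(W)$ and $A=\langle B,\tau\rangle$, and proves (i) for every \emph{point} $P$ there is $g\in B$ with $P^g=P^\tau$ (Lemma \ref{lem:pointsfixed}), and (ii) for every \emph{maximal} $M$ there is no such $g$ (Lemma \ref{lem:taudoubles}). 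Fact (ii) makes each $A$-orbit on maximals split into two $B$-orbits, and fact (i) forces those two $B$-orbits to cover every point equally often; choosing one $B$-orbit from each pair then automatically yields a hemisystem invariant under the single fixed copy of $B$. Your plan --- ``retain a suitably chosen half of each orbit so that every point lies on exactly $\frac{t+1}{2}$ of the retained maximals, with the choices independent'' --- is precisely the statement to be proved, and you supply no mechanism for it; worse, a ``half of an orbit'' is not $\mathrm{\Omega}_3(q)$-invariant, which is what drives you into the unexecuted (and unnecessary) search for a different copy of $\mathrm{PSL}_2(q)$ stabilising each output. You have correctly spotted that $2^n$ sets invariant under one copy of $\mathrm{\Omega}_3(q)$, with $n$ its number of orbits on maximals, cannot all be hemisystems (as $\emptyset$ and $\mathcal{M}$ are among the $2^n$ invariant subsets); the construction actually produces $2^m$ hemisystems where $m$ is the number of $A$-orbits, i.e.\ half the number of $B$-orbits, all admitting the same $B$, which dissolves the objection without changing the group.

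Two further concrete problems. First, you take the complement of the conic plane to be \emph{hyperbolic} of Witt index $d-1$; the paper instead puts the hyperbolic pair $e_0,f_0$ \emph{inside} the $3$-space $W=\langle z,e_0,f_0\rangle$, so that $U=W^\perp$ is \emph{elliptic} of Witt index $d-2$. That ellipticity is load-bearing: it forces every maximal to project nontrivially onto $\langle z\rangle$ and onto $\langle e_0,f_0\rangle$ (Lemma \ref{lem:NonEmptyProjection}), which yields the normal forms of Lemma \ref{lem:Basis}, and the Witt-index bound on $U$ supplies the contradictions in Cases 2a and 2c of Lemma \ref{lem:taudoubles}. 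With your hyperbolic complement these steps fail and the orbit analysis would have to be rebuilt from scratch. Second, your description of the lower strata (``described the same way'', with orbits parametrised by $M\cap W$ together with the position of the image of $M$ relative to the conic) is asserted rather than proved, and the subsequent uniformity of the counts $a_i(P)$ over all points --- which you yourself flag as the main obstacle --- is the whole theorem. As it stands the proposal is a plausible research plan, not a proof.
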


Hemisystems were first defined by Segre on Hermitian varieties, where he demonstrated the existence of a hemisystem in $\mathcal{H}(3, 3^2)$,
and raised the question
whether they exist in $\mathcal{H}(2d-1, q^2)$ for $d >2, q > 3$
\cite{Segre1965}. 
For a long time no new examples were found, and it was thought that Segre's example might be the only example, with Thas even conjecturing that there were no hemisystems of $\mathcal{H}(3,q^2)$, for $q>3$ \cite{ThasConjecture}.  This conjecture was disproved, however, when Cossidente and Penttila constructed infinite families for $\mathcal{H}(3,q^2)$ \cite{cossidente_penttila_2005} and $\mathcal{H}(5, q^2)$ \cite{Cossidente2009}, $q$ odd. In his PhD thesis, Luke Bayens constructed hemisystems of $\mathcal{H}(2d - 1, q)$, $d > 2, q$ odd \cite{Bayens_2013}, thus answering Segre's question. His construction introduced the so-called ``AB-Lemma'', which is also utilised by the construction in this paper, and is elaborated upon in Section \ref{sec:ABLemma}.

Hemisystems have also been generalised beyond Hermitian varieties. Cameron, Goethals, and Seidel extended the definition of a hemisystem to a generalised quadrangle of order $(q, q^2)$, for $q$ odd, and showed that the collinearity graph of such a hemisystem is strongly regular \cite{CameronDelsarteGoethals, Cameron_Goethals_Seidel_1978}.

Vanhove extended the concept of a hemisystem to regular near polygons, in particular showing that in the dual hermitian space $\mathcal{DH}(2d-1, q^2)$, for $q$ odd and $d\ge 3$, the existence of a hemisystem would induce new distance regular graphs with classical parameters \cite{Vanhove_2011}. 

Bamberg, Guidici, and Royle showed that every flock generalized quadrangle of order $(s^2,s)$, $s$ odd, contains a hemisystem \cite{Bamberg_Giudici_Royle_2010}, and van Dam, Martin, and Muzychuk showed that hemisystems of generalised quadrangles of order $(s^2, s)$ give rise to $4$-class cometric association schemes \cite{Dam_Martin_Muzychuk_2013}.

A common approach to the construction of geometric objects is to consider a subgroup of the automorphism group, and to stitch together its orbits on the elements of the geometry. Since elements of one type interact with an element of another type in the same manner within an orbit, far fewer elements need then be considered. This approach lends itself to large subgroups of the automorphism group, since this means there are fewer orbits, making it is easier to consider the interplay between them. By contrast, the hemisystems in the family presented in this paper admit a small group relative to the full automorphism group of the parabolic quadric. In fact, the admitted group is dependent only on $q$ and is constant regardless of the rank of the quadric. We construct the hemisystems by considering a parabolic plane in the ambient projective space and consider how the points and maximals of the parabolic quadric meet this plane.

\section{Background}\label{Background}

In this section we cover the necessary background theory required to prove Theorem \ref{main}. Most are standard definitions and results in the subject, and can be found in, for example, \cite{Aschbacher, KleidmanLiebeck}.

\subsection{Vector spaces with a quadratic form}
Let $V$ be an $n$-dimensional vector space over $\mathbb{F}_q$.
With respect to a basis $\{ e_1, e_2, \ldots, e_n \}$, the \emph{Gram matrix} $J$ of a bilinear form $\beta$ is the $n \times n$ matrix with entries
\[
J_{ij} = \beta(e_i, e_j).
\]
The Gram matrix describes the bilinear form with respect to the given basis, where
\[
\beta(v, w) = v J w^T.
\]

A \emph{quadratic form on $V$} is a map $\kappa: V \to \mathbb{F}_q$ such that for all $v \in V$ and all $\lambda \in \mathbb{F}_q$
\[
\kappa(\lambda v) = \lambda^2 \kappa(v)
\]
and $\beta(u, v) := \kappa(u + v) - \kappa(u) - \kappa(v)$ defines a bilinear form $\beta$ on $V$, called the \emph{associated bilinear form}. Note that $\kappa(v) = \beta(v, v)/2$, so the bilinear form uniquely determines $\kappa$ when $\mathbb{F}_q$ is of odd characteristic; that is, when $q$ is an odd prime power. In this paper we are concerned only with $q$ odd, so we may work with the bilinear and quadratic forms interchangeably, depending on which better suits the task at hand. 

A bilinear form $\beta$ is \emph{degenerate} if there exists some $v \in V\backslash \{0 \}$ such that $\beta(u, v)= 0$ for all $u \in V$, and \emph{nondegenerate} otherwise. A quadratic form is degenerate if its associated bilinear form is degenerate. A subspace $U \leq V$ is \emph{totally singular} if $\kappa(u)=0$ for all $u \in U$, and \emph{anisotropic} if $\kappa(u)\neq 0$ for all non-zero vectors in $U$. The \emph{Witt index} of a vector space equipped with a quadratic form is the dimension of the largest totally singular subspace. The \emph{perp} of a subspace $U$ is defined as $U^\perp := \{v \in V \mid \beta(u, v) = 0, \forall u \in U \}$. If $U$ is nondegenerate, then so too is $U^\perp$. If $u, v \in V$ are two vectors such that $\beta(u, v) = 1$ and $\beta(u, u) = \beta(v, v)=0$, then $( u, v )$ is called a \emph{hyperbolic pair}.

Up to equivalence, there are just three types of vector spaces with nondegenerate quadratic forms: \emph{parabolic, hyperbolic}, and \emph{elliptic}. The parabolic case occurs when $n$ is odd, while hyperbolic and elliptic cases occur for $n$ even. Moreover, $V$ has an orthogonal decomposition $V= H_1 \oplus H_2 \oplus \ldots \oplus H_d \oplus X$, where each $H_i$ is the span of a hyperbolic pair, $X$ is an anisotropic subspace with  $\dim(X)=0$ in the hyperbolic case, $\dim(X)=1$ in the parabolic case, and $\dim(X)=2$ in the elliptic case. The Witt index is given by $d$ in the previous decomposition, and hence the Witt index of a parabolic space is $\frac{1}{2}(n-1)$, the Witt index of a hyperbolic space is $\frac{n}{2}$, and the Witt index of an elliptic space is $\frac{n}{2}-1$.

Hyperbolic forms are often referred to as ``$+$'' type, while elliptic forms are referred to as ``$-$'' type, providing an easy notation to distinguish between the two cases in even dimension. Since there is only one parabolic form in odd dimension it is unnecessary to indicate its type, however we may refer to it as ``$\circ$'' type for consistency, or to identify it when the dimension is not explicitly stated.

\subsection{The quadrics} Let $V$ be an $n$-dimensional vector space over $\mathbb{F}_q$ equipped with a nondegenerate quadratic form $\kappa$.
Taking the totally singular subspaces of $V$ we obtain a \emph{parabolic quadric $\mathcal{Q}(2d, q)$}, a \emph{hyperbolic quadric} $\mathcal{Q}^+(2d-1,q)$, or an \emph{elliptic quadric} $\mathcal{Q}^-(2d+1, q)$, corresponding to the type of the form on $V$. Here $d$ is the Witt index and $2d$, $2d-1$, and $2d+1$ each give $n-1$, which is the projective dimension. The totally singular $1$-spaces are called points, while the largest totally singular subspaces are called \emph{maximals}. Incidence is then defined as inclusion of subspaces. The \emph{rank} of the quadric is given by the Witt index and corresponds to the number of different types of elements in the geometry. 

Recall that the order of an incidence structure is $(s,t)$ where there are $s+1$ points on each maximal, and $t+1$ maximals on every point. Thus a hemisystem may exist only when $t+1$ is even.
In the case of $\mathcal{Q}(2d,q)$,
\[
s+1 = \frac{q^{d}-1}{q-1}, \quad \text{ and }\quad t+1 = \prod\limits_{i=1}^{d-1}(q^i + 1).
\]
Hence we may limit ourselves to the case where $q$ is odd, since $t+1$ above is even precisely when $q$ is odd. Moreover, a hemisystem contains half of the set of maximals, and the complement of a hemisystem is also a hemisystem.

The automorphism group of the quadric is the group which preserves the totally singular subspaces, $\mathrm{P\Gamma O}_n^\epsilon(q)$. We elaborate on groups preserving quadratic forms in the following section.

\subsection{Groups preserving the quadratic form}
Let $V$ be a $n$-dimensional vector space over $\mathbb{F}_q$ equipped with a nondegenerate quadratic form $\kappa$. Let $q$ be an odd prime power throughout.

The subgroup of $\mathrm{GL}(V)$ preserving the form is called the \emph{orthogonal group}, denoted $\mathrm{O}(V)$.
The \emph{special orthogonal group}, $\mathrm{SO}(V)$, is the subgroup of $\mathrm{O}(V)$ consisting of the elements with determinant one, and the derived subgroup of $\mathrm{O}(V)$ is denoted by $\mathrm{\Omega}(V)$. Each group is an index two subgroup of the previous group, that is, $|\mathrm{O}(V) : \mathrm{SO}(V)| = |\mathrm{SO}(V) : \mathrm{\Omega}(V)| = 2$ \cite[Table 2.1.C]{KleidmanLiebeck}.

Moreover, since 
the nondegenerate quadratic forms are unique up to a change of basis, we may write $\mathrm{O}_n^\epsilon(q)$, $\mathrm{SO}_n^\epsilon(q)$, and $\mathrm{\Omega}_n^\epsilon(q)$, where $\epsilon \in \{ +, \circ, - \}$ corresponds to the type of the form, $n$ is the dimension of the vector space, and $q$ is the order of the field. We will often still write $\mathrm{O}(V)$, $\mathrm{SO}(V)$, or $\mathrm{\Omega}(V)$, to emphasise the vector space $V$. In particular, we adapt the notation to apply to a subspace of $V$ to mean the image of the projection of the group onto the subspace. Formally, let $X$ be $\mathrm{O}$, $\mathrm{SO}$, or $\mathrm{\Omega}$, and let $W$ be a nondegenerate subspace of dimension $m$ in $V$, where $\kappa|_W$ is the restriction of $\kappa$ to $W$, then
\begin{equation} \label{defnX}
X(W) := \{ g \oplus 1_{W^\perp} \mid g \in X_m^{\epsilon'}(q) \text{ with respect to } \kappa|_W \},
\end{equation}
for some $\epsilon' \in \{ +, \circ, -\}$.
Note that the restriction of $\kappa$ to $W$ need not have the same type as $\kappa$ itself, and hence $\epsilon$ is not necessarily equal to $\epsilon'$, for $X(V) \cong X_n^\epsilon(q)$. Moreover, $X(W)$ need not be a subgroup of $X$.

There also exist projective versions of each of these groups,
\begin{align*}
\mathrm{PX}(V) &:= \mathrm{X}(V) / (\mathrm{X}(V) \cap Z(\mathrm{GL}(V))),
\end{align*}
for $X = \mathrm{O}, \mathrm{SO},$ or $\mathrm{\Omega}$.
For a vector space over a field, $Z(GL(V))$ is simply all the non-zero scalar matrices, and so the projective versions of the groups are the original groups modulo the corresponding scalar matrices. As a result the projective versions of the groups act naturally on one dimensional subspaces rather than on vectors.

More on the classical groups can be found in Kleidman and Liebeck \cite{KleidmanLiebeck}. A few results which form part of more general results in Kleidman and Liebeck are collected here. Recall that $q$ is assumed to be odd.
\begin{lemma}\cite[2.9.1]{KleidmanLiebeck}\label{lem:isomorphisms}
There exist the following isomorphisms:

\begin{enumerate}
	\item $\mathrm{PSL}_2(q) \cong \mathrm{\Omega}_3(q)$,
	\item $\mathrm{O}_2^\pm(q) \cong D_{2(q \mp 1) }$,
	\item $\mathrm{SO}_2^\pm(q) \cong \mathbb{Z}_{q \mp 1}$,
	\item $\mathrm{\Omega}_2^\pm(q) \cong \mathbb{Z}_{(q\mp 1)/2}$.
\end{enumerate}
\end{lemma}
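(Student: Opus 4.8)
The plan is to verify each of the four isomorphisms directly from the structure of the orthogonal groups in dimensions two and three, treating them as elementary facts about small classical groups rather than deriving them as special cases of Kleidman--Liebeck's general machinery; since the excerpt already cites \cite[2.9.1]{KleidmanLiebeck}, the cleanest route is simply to point to that reference, but a self-contained sketch runs as follows. First I would dispatch (2), (3), and (4) together, since they all concern $\mathcal{Q}^\pm$ in the plane. Fix a nondegenerate quadratic form on $V=\mathbb{F}_q^2$. In the hyperbolic ($+$) case choose a hyperbolic pair $(e,f)$, so $\kappa(xe+yf)=xy$; an element of $\mathrm{O}_2^+(q)$ either scales $e\mapsto \lambda e$, $f\mapsto \lambda^{-1}f$ (these form a cyclic group of order $q-1$, with determinant $1$, giving $\mathrm{SO}_2^+(q)\cong\mathbb{Z}_{q-1}$) or swaps the two singular $1$-spaces (an involution of determinant $-1$), and conjugating the torus by this involution inverts it, so $\mathrm{O}_2^+(q)\cong D_{2(q-1)}$. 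In the elliptic ($-$) case the same analysis over $\mathbb{F}_{q^2}$ — where the form becomes hyperbolic — shows $\mathrm{SO}_2^-(q)$ is the norm-one torus $\mathbb{Z}_{q+1}$ and $\mathrm{O}_2^-(q)\cong D_{2(q+1)}$; alternatively one reads both off the stated index-two chain $|\mathrm{O}:\mathrm{SO}|=|\mathrm{SO}:\mathrm{\Omega}|=2$. Finally $\mathrm{\Omega}_2^\pm(q)$ is the index-two subgroup of the cyclic group $\mathrm{SO}_2^\pm(q)\cong\mathbb{Z}_{q\mp1}$, hence cyclic of order $(q\mp1)/2$, which is (3) $\Rightarrow$ (4).

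For (1) I would exhibit an explicit surjection $\mathrm{SL}_2(q)\to\mathrm{SO}_3(q)$ with kernel $\{\pm I\}$, and then check that it identifies $\mathrm{PSL}_2(q)$ with $\mathrm{\Omega}_3(q)$. The standard construction: let $\mathrm{SL}_2(q)$ act by conjugation on the $3$-dimensional space $W$ of trace-zero $2\times2$ matrices over $\mathbb{F}_q$, equipped with the quadratic form $\kappa(A)=\det(A)$ (equivalently the form derived from $(A,B)\mapsto \tfrac12\operatorname{tr}(AB)$). Conjugation preserves $\det$, so this gives a homomorphism $\varphi\colon\mathrm{SL}_2(q)\to\mathrm{O}_3(q)$; its kernel is the centre $\{\pm I\}$, and since $\mathrm{SL}_2(q)$ is generated by transvections, whose images have determinant $1$, the image lies in $\mathrm{SO}_3(q)$. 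Counting orders, $|\mathrm{SL}_2(q)|=q(q^2-1)$ and $|\mathrm{SO}_3(q)|=q(q^2-1)$, so $\varphi$ induces an isomorphism $\mathrm{PSL}_2(q)\xrightarrow{\ \sim\ }\mathrm{SO}_3(q)$. To finish, note that in odd dimension $n=3$ the scalar $-I$ has determinant $-1$, so $-I\notin\mathrm{SO}_3(q)$, whence $\mathrm{PSO}_3(q)=\mathrm{SO}_3(q)$; and $\mathrm{SO}_3(q)=\mathrm{\Omega}_3(q)$ because $\mathrm{SL}_2(q)$ is perfect for $q>3$ (so its image equals its own derived subgroup, which is $\mathrm{\Omega}_3(q)$), with the two remaining small cases $q=3$ checked by hand. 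Combining, $\mathrm{PSL}_2(q)\cong\mathrm{\Omega}_3(q)$.

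The only genuinely delicate point is the last identification $\mathrm{SO}_3(q)=\mathrm{\Omega}_3(q)$: a priori $\mathrm{\Omega}$ is only an index-two subgroup of $\mathrm{SO}$, so one must rule out that the spinor norm is nontrivial on all of $\mathrm{SO}_3(q)$. This is handled exactly as above — $\mathrm{PSL}_2(q)$ is perfect, and a perfect group has no proper subgroup of index two — but it is the step where invoking \cite[2.9.1]{KleidmanLiebeck} outright is most tempting. Given that the lemma is explicitly quoted from Kleidman--Liebeck, I expect the paper's actual ``proof'' to be the single line \emph{See \cite[2.9.1]{KleidmanLiebeck}}, and that is what I would write; the sketch above is what one would supply if a self-contained argument were wanted.
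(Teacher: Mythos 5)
The paper offers no proof of this lemma at all: it is quoted verbatim from Kleidman--Liebeck, exactly as you anticipated, and your treatment of parts (2)--(4) is a correct, standard verification. However, your sketch of part (1) contains a genuine error that happens to cancel itself out. You claim that, since $|\mathrm{SL}_2(q)|=q(q^2-1)=|\mathrm{SO}_3(q)|$, the adjoint map $\varphi$ induces an isomorphism $\mathrm{PSL}_2(q)\cong\mathrm{SO}_3(q)$; but you have just computed $\ker\varphi=\{\pm I\}$, so the image of $\varphi$ has order $q(q^2-1)/2$ and is a \emph{proper index-two} subgroup of $\mathrm{SO}_3(q)$. In fact $\mathrm{SO}_3(q)\cong\mathrm{PGL}_2(q)$, not $\mathrm{PSL}_2(q)$. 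Your subsequent assertion that $\mathrm{SO}_3(q)=\mathrm{\Omega}_3(q)$ is likewise false: it contradicts the index-two chain $|\mathrm{SO}(V):\mathrm{\Omega}(V)|=2$ that the paper itself records from \cite[Table 2.1.C]{KleidmanLiebeck}, and $\mathrm{PGL}_2(q)$ is visibly not perfect, having $\mathrm{PSL}_2(q)$ as an index-two subgroup. The two mistakes compensate and you land on the true statement, but the argument as written would not survive scrutiny.

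The repair is short. The image of $\varphi$ is isomorphic to $\mathrm{PSL}_2(q)$, is perfect for $q>3$, and has order $q(q^2-1)/2=|\mathrm{\Omega}_3(q)|$. Since $\mathrm{\Omega}_3(q)=[\mathrm{O}_3(q),\mathrm{O}_3(q)]$, we get $\mathrm{im}\,\varphi=[\mathrm{im}\,\varphi,\mathrm{im}\,\varphi]\le\mathrm{\Omega}_3(q)$, and equality follows by comparing orders; the case $q=3$ is checked directly ($\mathrm{\Omega}_3(3)\cong A_4\cong\mathrm{PSL}_2(3)$, as the paper notes in its concluding remarks). Alternatively, one verifies that the spinor norm vanishes on $\varphi(\mathrm{SL}_2(q))$. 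Either way, the conclusion is $\mathrm{PSL}_2(q)\cong\mathrm{\Omega}_3(q)<\mathrm{SO}_3(q)$, not $\mathrm{PSL}_2(q)\cong\mathrm{SO}_3(q)$.
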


The vectors of $V$ can be partitioned according to their value under the quadratic form $\kappa$, so for $\alpha \in \mathbb{F}$ we define
\begin{equation} \label{Valpha}
V_\alpha := \{v \in V\backslash \{0\} \mid \kappa(v) = \alpha \}.
\end{equation}
We have the following orbit results on $V_\alpha$.

\begin{lemma}\cite[2.10.5]{KleidmanLiebeck}\label{OmegaOrbits}$\quad$
\begin{enumerate}
	\item $\mathrm{O}_n^\epsilon(q)$ is transitive on $V_\alpha$, for all $n$, $\alpha$, and $\epsilon$.
	\item $\mathrm{\Omega}^\circ_3(q)$ has two orbits on $V_0$ of size $\frac{1}{2}(q^2-1)$ and is transitive on $V_\alpha$ for $\alpha \neq 0$.
	\item $\mathrm{\Omega}_2^+(q)$ has $4$ orbits on $V_0$, and $\mathrm{\Omega}_2^\pm(q)$ has $2$ orbits on $V_\alpha$ for $\alpha \neq 0$.
\end{enumerate}
\end{lemma}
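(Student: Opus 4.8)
The plan is to derive all three parts from two standard ingredients: Witt's extension theorem — every isometry between subspaces of $V$ extends to an element of $\mathrm{O}(V)$ — and the normality of $\mathrm{\Omega}(V)$ in $\mathrm{O}(V)$ together with an orbit--stabiliser count. Part (1) is immediate from Witt: given $u,v\in V_\alpha$, the linear map sending $u$ to $v$ is an isometry of the $1$-spaces $\langle u\rangle$, $\langle v\rangle$ (since $\kappa(\lambda u)=\lambda^2\alpha=\kappa(\lambda v)$), so it extends to some $g\in\mathrm{O}_n^\epsilon(q)$ with $u^g=v$; hence $\mathrm{O}_n^\epsilon(q)$ is transitive on $V_\alpha$ for all $n$, $\alpha$, $\epsilon$.

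For (2) and (3), fix $v\in V_\alpha$ and put $H_v:=\mathrm{Stab}_{\mathrm{O}(V)}(v)$, so $V_\alpha\cong\mathrm{O}(V)/H_v$ as an $\mathrm{O}(V)$-set. Since $\mathrm{\Omega}(V)\trianglelefteq\mathrm{O}(V)$, each double coset $\mathrm{\Omega}(V)\,g\,H_v$ equals the left coset $g\,\mathrm{\Omega}(V)H_v$ of the subgroup $\mathrm{\Omega}(V)H_v$; thus the number of $\mathrm{\Omega}(V)$-orbits on $V_\alpha$ is $[\mathrm{O}(V):\mathrm{\Omega}(V)H_v]$, which via $|\mathrm{\Omega}(V)H_v|=|\mathrm{\Omega}(V)|\,|H_v|/|\mathrm{\Omega}(V)\cap H_v|$ is pinned down once we know $|\mathrm{\Omega}(V)\cap H_v|$. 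This is where the work lies, and it splits into the nondegenerate and the singular cases.

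If $\alpha\neq0$ then $\langle v\rangle$ is nondegenerate, $H_v=1_{\langle v\rangle}\oplus\mathrm{O}(v^\perp)$, and — because the spinor norm of a product of reflections in vectors of $v^\perp$ is the same whether computed in $\mathrm{O}(v^\perp)$ or in $\mathrm{O}(V)$ (the extended reflections fix $v$), while the determinant is computed on $v^\perp$ — one obtains $\mathrm{\Omega}(V)\cap H_v=1_{\langle v\rangle}\oplus\mathrm{\Omega}(v^\perp)\cong\mathrm{\Omega}_{n-1}^{\epsilon'}(q)$. Feeding in the orders from Lemma \ref{lem:isomorphisms} and the chain $|\mathrm{O}:\mathrm{SO}|=|\mathrm{SO}:\mathrm{\Omega}|=2$ then yields one orbit when $n=3$ and two orbits when $n=2$. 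If $\alpha=0$ and $n=3$, then $\mathcal{Q}(2,q)$ has $q+1$ points so $V_0$ has $(q+1)(q-1)=q^2-1$ vectors and $|H_v|=2q$; the key point is that $H_v\not\le\mathrm{\Omega}_3(q)$, which holds because any involution $\sigma\in H_v$ fixes the singular vector $v$, hence $v$ lies in the $(+1)$-eigenspace of $\sigma$, and were $\sigma$ not a reflection its $(-1)$-eigenspace would be $\langle v\rangle^\perp=v^\perp$, forcing $v\in v^\perp$ to lie in both eigenspaces — impossible — so $\sigma$ is a reflection of determinant $-1$ and lies outside $\mathrm{SO}_3(q)\supseteq\mathrm{\Omega}_3(q)$. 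Combining this with $|H_v|=2q$ (to see the image of $H_v$ in the group $\mathrm{O}_3(q)/\mathrm{\Omega}_3(q)$ of order $4$ has order exactly $2$), the index formula gives exactly two $\mathrm{\Omega}_3(q)$-orbits, and these have equal size since $\mathrm{O}_3(q)$ permutes them transitively, namely $\tfrac12(q^2-1)$ each. Finally, for $\alpha=0$ and $n=2$ of $+$ type we argue directly: in coordinates with $\kappa(x,y)=xy$, the set $V_0$ is the union of the two punctured coordinate axes, and $\mathrm{\Omega}_2^+(q)$ — the square elements inside $\mathrm{SO}_2^+(q)=\{(x,y)\mapsto(\lambda x,\lambda^{-1}y):\lambda\in\F_q^*\}$ — acts on each axis with two orbits (squares and non-squares), giving $4$ in all.

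The main obstacle is exactly the passage from $\mathrm{O}(V)$ to $\mathrm{\Omega}(V)$, i.e.\ the determinant/spinor-norm bookkeeping that shows $H_v$ is not contained in $\mathrm{\Omega}(V)$, since this is precisely what turns a single $\mathrm{O}(V)$-orbit into the stated number of $\mathrm{\Omega}(V)$-orbits; everything else is formal once $|\mathrm{\Omega}(V)\cap H_v|$ is in hand.
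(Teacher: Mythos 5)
Your proof is correct, but it cannot be compared line by line with the paper's own argument because the paper gives none: the lemma is quoted directly from \cite[2.10.5]{KleidmanLiebeck} without proof, so what you have supplied is the standard self-contained derivation of the cited fact. Part (1) from Witt's extension theorem is right. For (2) and (3), your reduction via the normal subgroup $\mathrm{\Omega}(V)\trianglelefteq\mathrm{O}(V)$ --- the $\mathrm{\Omega}(V)$-orbits on the transitive $\mathrm{O}(V)$-set $V_\alpha$ are the cosets of $\mathrm{\Omega}(V)H_v$, all of one size --- is the right organising principle, and the only real work is computing $|H_v\cap\mathrm{\Omega}(V)|$, as you say. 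For $\alpha\neq 0$ the identity $\mathrm{O}(v^\perp)\cap\mathrm{\Omega}(V)=\mathrm{\Omega}(v^\perp)$ that you justify by spinor-norm bookkeeping is exactly Lemma~\ref{OmegaStabiliser}(3), which the paper also imports from \cite{KleidmanLiebeck}, so you could cite that instead (minding that the paper defines $\mathrm{\Omega}$ as the derived subgroup rather than the spinor-norm kernel; the two agree for the groups occurring here). Your handling of the singular case with $n=3$ is the one genuinely non-formal step and it is sound: an involution fixing a singular vector $v$ cannot have $(-1)$-eigenspace of dimension $2$, since that eigenspace would then be $v^\perp$, which contains $v$; hence every involution in $H_v$ has determinant $-1$, the image of the order-$2q$ group $H_v$ in the order-four quotient $\mathrm{O}_3(q)/\mathrm{\Omega}_3(q)$ has order exactly $\gcd(2q,4)=2$, and there are two orbits of equal size $\tfrac{1}{2}(q^2-1)$. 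The direct count of the four $\mathrm{\Omega}_2^+(q)$-orbits on the two punctured coordinate axes is likewise correct and consistent with Lemma~\ref{lem:OmegaElements}.
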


We denote the stabiliser in $H \le \mathrm{O}(V)$ of a subspace $W$ or a vector $v$, by $H_W$ or $H_v$, respectively. For a subgroup $H$ fixing a subspace $W$, the subgroup $H$ induces upon $W$ is denoted by $H^W$.

The following lemma describes how the orthogonal group interacts with the stabilisers of a nondegenerate subspace and its perp. It holds in more generality, but for our purposes we restrict it to $\mathrm{O}(V)$. 

\begin{lemma} \cite[4.1.1]{KleidmanLiebeck}\label{OmegaStabiliser}
Assume that $V = U \perp W$, where $U$ is nondegenerate, and $X = \mathrm{O}, \mathrm{SO}$, or $\mathrm{\Omega}$. Then:
\begin{enumerate}
	\item	$\mathrm{O}(V)_{U} = \mathrm{O}(U) \times \mathrm{O}(W)$,
	\item	$\mathrm{\Omega}(V)_{U} \geq \mathrm{\Omega}(U) \times \mathrm{\Omega}(W)$,
	\item	$X(U) \cap \mathrm{\Omega}(V) = \mathrm{\Omega}(U)$ and $X(W) \cap \mathrm{\Omega}(V) = \mathrm{\Omega}(W)$,
	\item	$\mathrm{\Omega}(V)_{U}^{U} = \mathrm{O}(U)$,
	\item $\mathrm{\Omega}(V)_{U}^{W} = \mathrm{O}(W)$, unless $\dim(U)=1$.
\end{enumerate}
\end{lemma}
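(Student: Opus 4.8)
The plan is to work with the two invariants that detect the chain $\mathrm{\Omega}(V)\le\mathrm{SO}(V)\le\mathrm{O}(V)$: the determinant $\det\colon\mathrm{O}(V)\to\{\pm1\}$, with kernel $\mathrm{SO}(V)$, and the spinor norm $\theta\colon\mathrm{O}(V)\to\F_q^{*}/(\F_q^{*})^{2}$, so that $\mathrm{\Omega}(V)$ is exactly the set of $g\in\mathrm{SO}(V)$ with $\theta(g)$ trivial. I would first record two preliminary facts and then use them throughout. First, since $U$ is nondegenerate $W=U^{\perp}$ is too, and any $g\in\mathrm{O}(V)$ that stabilises $U$ also stabilises $U^{\perp}=W$ (as $g$ preserves $\beta$), so $g=g|_{U}\oplus g|_{W}$ with $g|_{U}\in\mathrm{O}(U)$ and $g|_{W}\in\mathrm{O}(W)$. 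Second, both $\det$ and $\theta$ are ``block multiplicative'' along the orthogonal sum: $\det(a\oplus b)=\det(a)\det(b)$ and $\theta_{V}(a\oplus b)=\theta_{U}(a)\,\theta_{W}(b)$ for $a\in\mathrm{O}(U)$, $b\in\mathrm{O}(W)$. This reduces to reflections: a reflection of $U$ in a nonsingular vector $v\in U$ is also the reflection of $V$ in the same $v$, with determinant $-1$ and spinor norm the class of $\kappa(v)$, and every element of $\mathrm{O}(U)$ (and of $\mathrm{O}(W)$) is a product of such reflections by the Cartan--Dieudonn\'e theorem.

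Given these, (1) follows: the first fact shows $\mathrm{O}(V)_{U}\le\mathrm{O}(U)\times\mathrm{O}(W)$ once we identify $\mathrm{O}(U)$ with $\{g\oplus1_{W}\}$ and $\mathrm{O}(W)$ with $\{1_{U}\oplus h\}$, while conversely any $g\oplus h$ with $g\in\mathrm{O}(U)$, $h\in\mathrm{O}(W)$ preserves $\kappa=\kappa|_{U}\perp\kappa|_{W}$ and fixes $U$; these two subgroups commute, meet trivially, and generate, so the product is internal and direct. For (2), if $a\in\mathrm{\Omega}(U)$ and $b\in\mathrm{\Omega}(W)$ then the second fact gives $\det(a\oplus b)=1$ and $\theta_{V}(a\oplus b)$ trivial, hence $a\oplus b\in\mathrm{\Omega}(V)_{U}$; only the inclusion holds, since a pair $(g,h)$ with $g\notin\mathrm{SO}(U)$ and $h\notin\mathrm{SO}(W)$ can still have determinants (and spinor norms) multiplying to the identity. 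For (3), by \eqref{defnX} an element of $X(U)$ has the shape $g\oplus1_{W}$, and the second fact gives $\det(g\oplus1_{W})=\det(g)$ and $\theta_{V}(g\oplus1_{W})=\theta_{U}(g)$; hence $g\oplus1_{W}\in\mathrm{\Omega}(V)$ if and only if $g\in\mathrm{\Omega}(U)$. Since $\mathrm{\Omega}(U)\le\mathrm{SO}(U)\le\mathrm{O}(U)$, intersecting with $X(U)$ leaves $\mathrm{\Omega}(U)$ for each of $X=\mathrm{O},\mathrm{SO},\mathrm{\Omega}$; the statement for $W$ is identical.

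For (4) and (5) the inclusion $\mathrm{\Omega}(V)_{U}^{U}\le\mathrm{O}(U)$ is immediate. For the reverse, given $g\in\mathrm{O}(U)$ I would seek $h\in\mathrm{O}(W)$ with $g\oplus h\in\mathrm{\Omega}(V)$, which by the second fact means exactly $\det(h)=\det(g)$ and $\theta_{W}(h)=\theta_{U}(g)$. So the crux is that $h\mapsto(\det h,\theta_{W}(h))$ maps $\mathrm{O}(W)$ onto $\{\pm1\}\times\F_q^{*}/(\F_q^{*})^{2}$: the identity realises the trivial pair, a reflection $r_{v}$ realises $(-1,\text{class of }\kappa(v))$, and a product of two reflections realises $(+1,\text{non-square class})$, provided $\kappa|_{W}$ represents nonsingular vectors in both square classes. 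This last condition holds whenever $\dim W\ge2$ (a hyperbolic plane represents every scalar, and an anisotropic plane carries the surjective norm form of $\F_{q^{2}}/\F_q$), giving $\mathrm{\Omega}(V)_{U}^{U}=\mathrm{O}(U)$; running the same argument with $U$ and $W$ exchanged, using $\mathrm{\Omega}(V)_{U}=\mathrm{\Omega}(V)_{W}$, yields $\mathrm{\Omega}(V)_{U}^{W}=\mathrm{O}(W)$. The exceptional configuration is precisely the one in which the complementary factor is $1$-dimensional, so that only the two pairs coming from $1_{W}$ and the unique reflection $r_{w}$ occur and the induced group can drop to an index-two subgroup; that case is then inspected directly using Lemma~\ref{lem:isomorphisms} and Lemma~\ref{OmegaOrbits}. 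I expect the main obstacle to be precisely this spinor-norm bookkeeping: establishing the block multiplicativity of $\theta$, pinning down which square classes are hit by reflections of $W$, and disposing of the low-dimensional exceptions by hand; everything else is formal once those are in place.
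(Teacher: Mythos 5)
Your argument is essentially sound, but there is no in-paper proof to compare it against: the paper states this lemma as a direct quotation of \cite[4.1.1]{KleidmanLiebeck} and offers no proof of its own. What you have written is, in effect, the standard argument underlying that reference: identify $\mathrm{\Omega}(V)$ as the common kernel of the determinant and the spinor norm on $\mathrm{SO}(V)$, note that an isometry stabilising $U$ also stabilises $W=U^{\perp}$ and hence splits as $g|_U\oplus g|_W$, and reduce the block-multiplicativity of both invariants to reflections via Cartan--Dieudonn\'e. Parts (1)--(3) follow exactly as you say, and your surjectivity argument for (4)--(5) --- that $h\mapsto(\det h,\theta_W(h))$ realises all four classes as soon as $\kappa|_W$ represents nonsingular vectors in both square classes, which holds for $\dim W\ge 2$ --- is the right mechanism and correctly locates the exception in (5). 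Two remarks. First, your own analysis shows that (4) needs the symmetric caveat ``unless $\dim(W)=1$'': for $V=\langle u\rangle\perp\langle w\rangle$ with $\kappa(u)\kappa(w)$ a nonsquare, $\mathrm{\Omega}(V)_U$ induces only the identity on $U$. The paper's statement of (4) omits this, harmlessly, since in all of its applications the complementary space has dimension at least $2$. Second, to make the argument complete you would still need to write out the block-multiplicativity of the spinor norm (a reflection of $U$ in $v$ extends to the reflection of $V$ in $v$, and the normalising constant in the definition washes out on products of evenly many reflections) and the fact that a nondegenerate quadratic form of dimension at least $2$ over $\F_q$, $q$ odd, represents both square classes; both are routine, and the second is exactly the hyperbolic-plane/norm-form observation you give.
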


When considering the subspace spanned by a hyperbolic pair, it is easy to describe the elements of the orthogonal group explicitly. Since the special orthogonal group consists of the determinant one elements of the orthogonal group and the derived subgroup has index two in the special orthogonal group, the elements of these groups are also easily describable.

\begin{lemma}\label{lem:OmegaElements} Given the quadratic form $\kappa(x_1, x_2)= x_1x_2$ for a two dimensional vector space over $\mathbb{F}_q$,
\[
\mathrm{O}^+_2(q) = 
\bigg
\{
\left(
\begin{array}{cc}
 \gamma &0  \\
0 & \gamma^{-1}  \\
\end{array}
\right), \left(
\begin{array}{cc}
 0 & \gamma \\
\gamma^{-1} & 0  \\
\end{array}
\right)
: \gamma \in \mathbb{F}_q^*
\bigg \}.
\]
Moreover, $\mathrm{SO}_2^+(q)$ consists only of the diagonal elements, and $\mathrm{\Omega}_2^+(q)$ consists of those elements with squares on the diagonal.
\end{lemma}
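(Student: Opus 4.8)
The plan is to work directly from the definition of the orthogonal group as the stabiliser of $\kappa$ in $\mathrm{GL}_2(q)$, and then cut down to $\mathrm{SO}_2^+(q)$ and $\mathrm{\Omega}_2^+(q)$ using the determinant together with the structure results collected in Lemma~\ref{lem:isomorphisms}.

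First I would fix coordinates. For $v=(x_1,x_2)$ and $w=(y_1,y_2)$ the associated bilinear form is $\beta(v,w)=\kappa(v+w)-\kappa(v)-\kappa(w)=x_1y_2+x_2y_1$, so $(e_1,e_2)$ is a hyperbolic pair and the Gram matrix is $\bigl(\begin{smallmatrix}0&1\\1&0\end{smallmatrix}\bigr)$. Writing a general element of $\mathrm{GL}_2(q)$ as $g=\bigl(\begin{smallmatrix}a&b\\ c&d\end{smallmatrix}\bigr)$ and imposing $\kappa(vg)=\kappa(v)$ for all $v$, the expansion $\kappa\bigl((x_1,x_2)g\bigr)=ab\,x_1^2+(ad+bc)\,x_1x_2+cd\,x_2^2$ and comparison of coefficients with $x_1x_2$ yield the system $ab=0$, $cd=0$, $ad+bc=1$. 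A short case split according to whether $a=0$ then forces either $b=c=0$ and $d=a^{-1}$, or $a=d=0$ and $c=b^{-1}$; since each resulting matrix is invertible and visibly satisfies the system, this establishes the displayed description of $\mathrm{O}_2^+(q)$.

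For the special orthogonal group I would simply compute determinants: a diagonal element $\mathrm{diag}(\gamma,\gamma^{-1})$ has determinant $1$, whereas an antidiagonal element $\bigl(\begin{smallmatrix}0&\gamma\\ \gamma^{-1}&0\end{smallmatrix}\bigr)$ has determinant $-1$, and $-1\neq 1$ because $q$ is odd; hence $\mathrm{SO}_2^+(q)$ consists exactly of the diagonal elements. Finally, $\gamma\mapsto\mathrm{diag}(\gamma,\gamma^{-1})$ is an isomorphism $\mathbb{F}_q^*\to\mathrm{SO}_2^+(q)$, so $\mathrm{SO}_2^+(q)$ is cyclic of order $q-1$, in agreement with Lemma~\ref{lem:isomorphisms}(3), and by Lemma~\ref{lem:isomorphisms}(4) the subgroup $\mathrm{\Omega}_2^+(q)$ has index two in it. A cyclic group of even order has a unique subgroup of index two, namely its subgroup of squares, so $\mathrm{\Omega}_2^+(q)=\{\mathrm{diag}(\gamma^2,\gamma^{-2}):\gamma\in\mathbb{F}_q^*\}$, that is, the diagonal matrices whose diagonal entries are squares in $\mathbb{F}_q^*$.

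The computation is entirely routine; the one point that needs a little care is the last step, identifying $\mathrm{\Omega}_2^+(q)$ with the subgroup of squares rather than merely with some index-two subgroup. I would handle this via the uniqueness of the index-two subgroup of a cyclic group together with the order count furnished by Lemma~\ref{lem:isomorphisms}; an alternative would be to compute the spinor norm of $\mathrm{diag}(\gamma,\gamma^{-1})$ directly.
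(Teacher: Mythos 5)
Your proof is correct. The paper states Lemma~\ref{lem:OmegaElements} without any proof, treating it as a standard computation, so there is no argument to compare yours against; your direct verification --- expanding $\kappa(vg)$, comparing coefficients to get $ab=0$, $cd=0$, $ad+bc=1$, splitting on $a=0$, sorting by determinant, and then pinning down $\mathrm{\Omega}_2^+(q)$ as the unique index-two subgroup (hence the squares) of the cyclic group $\mathrm{SO}_2^+(q)\cong\mathbb{F}_q^*$ --- is complete and handles the one genuinely delicate step correctly.
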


We summarise some of the core information relating to a vector space $V$ equipped with a quadratic form $\kappa$ in Table \ref{tab:summary} below.

\begin{table}[h!]
\begin{center}
\begin{tabular}{ lclclclcl } 
 \hline
 Type & $n = \dim(V)$ & $\epsilon$ & $d =$ Witt Index & $O(V)$ \\
\hline
 Parabolic & Odd & $\circ$ & $\frac{1}{2}(n-1)$ & $\mathrm{O}^\circ_n(q)$  \\
 Hyperbolic & Even & $+$ & $\frac{n}{2}$ & $\mathrm{O}_n^+(q)$ \\
 Elliptic & Even & $-$ & $\frac{n}{2}-1$ & $\mathrm{O}_n^-(q)$ \\
 \hline
\end{tabular}
\end{center}
\caption{Vector spaces with quadratic forms}
\label{tab:summary}
\end{table}

\subsection{The $AB$-Lemma} \label{sec:ABLemma}

The following lemma, often referred to as the ``AB-Lemma'', was first stated in Luke Bayens' dissertation \cite{Bayens_2013}. Given an incidence geometry whose automorphism group contains subgroups with certain properties, the lemma helps prove the existence of hemisystems without the need to construct the tactical configuration.

\begin{lemma}[The AB-Lemma {\cite[4.4.1]{Bayens_2013}}]\label{lem:AB} Let $\mathcal{S} = (\mathcal{P}, \mathcal{M}, I)$ be an incidence structure with two types, called points $\mathcal{P}$ and maximals $\mathcal{M}$.
Let $A$ and $B$ be two subgroups of the automorphism group of $\mathcal{S}$ such that
\begin{enumerate}
	\item $B$ is a normal subgroup of $A$,
	\item $A$ and $B$ have the same orbits on $\mathcal{P}$,
	\item each $A$-orbit on $\mathcal{M}$ splits into two $B$-orbits.
\end{enumerate}
Then there are $2^m$ hemisystems admitting $B$, where $m$ is the number of $A$-orbits on the maximals.
\end{lemma}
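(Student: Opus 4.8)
The plan is to build every hemisystem as a union of $B$-orbits on $\mathcal{M}$, taking exactly one of the two $B$-orbits inside each $A$-orbit, and then to check that each such union meets every point in precisely half of its maximals. The crux of the whole argument is a \emph{balance statement}: for each point $P$ and each $A$-orbit $\mathcal{O}$ on $\mathcal{M}$, writing $\mathcal{O} = \mathcal{O}_1 \sqcup \mathcal{O}_2$ for its splitting into two $B$-orbits provided by hypothesis~(3), the number of maximals of $\mathcal{O}_1$ incident with $P$ equals the number of maximals of $\mathcal{O}_2$ incident with $P$. Granting this, the rest is bookkeeping.

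First I would observe that $A$ must interchange $\mathcal{O}_1$ and $\mathcal{O}_2$. Since $B \trianglelefteq A$, each $a \in A$ carries a $B$-orbit to a $B$-orbit, and it carries $\mathcal{O}_i$ inside the $A$-invariant set $\mathcal{O}$; hence $A$ permutes the two-element set $\{\mathcal{O}_1, \mathcal{O}_2\}$. If $A$ fixed $\mathcal{O}_1$ setwise, then $\mathcal{O}_1$ would be a proper nonempty $A$-invariant subset of the single $A$-orbit $\mathcal{O}$, which is impossible. Therefore there is some $a \in A$ with $\mathcal{O}_1^{\,a} = \mathcal{O}_2$, and consequently $\mathcal{O}_2^{\,a} = \mathcal{O}_1$ and $\mathcal{O}_1^{\,a^{-1}} = \mathcal{O}_2$.

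Next I would prove the balance statement by an orbit--conjugation computation. Put $n_i(P) := |\{M \in \mathcal{O}_i : P \in M\}|$. For any $g \in \mathrm{Aut}(\mathcal{S})$ the map $M \mapsto M^g$ sends the maximals through $P$ bijectively onto the maximals through $P^g$ and restricts to a bijection $\mathcal{O}_i \to \mathcal{O}_i^{\,g}$; taking $g = b \in B$ (so $\mathcal{O}_i^{\,b} = \mathcal{O}_i$) shows that $n_i$ is constant on $B$-orbits of points, hence, by hypothesis~(2), constant on $A$-orbits of points. Taking instead $g = a$, the element from the previous step: for $M \in \mathcal{O}_1$ we have $P^a \in M \iff P \in M^{a^{-1}}$, and $M \mapsto M^{a^{-1}}$ is a bijection $\mathcal{O}_1 \to \mathcal{O}_1^{\,a^{-1}} = \mathcal{O}_2$, so $n_1(P^a) = n_2(P)$. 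Since $P^a$ lies in the $A$-orbit of $P$, the constancy just established gives $n_1(P) = n_1(P^a) = n_2(P)$, which is the balance statement.

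Finally I would assemble the count. List the $A$-orbits on $\mathcal{M}$ as $\mathcal{O}^{(1)}, \ldots, \mathcal{O}^{(m)}$ with splittings $\mathcal{O}^{(j)} = \mathcal{O}^{(j)}_1 \sqcup \mathcal{O}^{(j)}_2$, and for each tuple $(\varepsilon_1, \ldots, \varepsilon_m) \in \{1,2\}^m$ set $H = \bigcup_{j=1}^{m} \mathcal{O}^{(j)}_{\varepsilon_j}$. Summing the balance identity over $j$, the number of maximals of $H$ incident with an arbitrary point $P$ is $\sum_{j} n^{(j)}_{\varepsilon_j}(P) = \tfrac12 \sum_{j} \big( n^{(j)}_1(P) + n^{(j)}_2(P) \big) = \tfrac12 |\{M \in \mathcal{M} : P \in M\}| = \tfrac{t+1}{2}$, so $H$ is a hemisystem; being a union of $B$-orbits it is $B$-invariant, hence admits $B$. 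Distinct tuples $(\varepsilon_j)$ yield distinct subsets of $\mathcal{M}$, so this produces exactly $2^m$ hemisystems admitting $B$. The only genuinely non-formal point is the balance statement, and within it the observation that an $A$-orbit is a minimal $A$-invariant set and so cannot be broken by a normal subgroup into two $A$-invariant pieces — this is exactly what forces the swapping element $a$ to exist; everything else is routine manipulation of orbits under conjugation.
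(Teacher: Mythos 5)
Your proof is correct, and the key step --- the balance statement $n_1(P)=n_2(P)$, obtained by producing an element $a\in A$ that swaps the two $B$-orbits (which must exist since a normal subgroup cannot split an $A$-orbit into two $A$-invariant pieces) and combining it with the constancy of $n_i$ on point-orbits forced by hypothesis (2) --- is exactly the engine of the standard argument. The paper itself states this lemma without proof, citing Bayens' dissertation, and your argument is essentially the proof given there (phrased via tactical decompositions), so there is nothing to correct and no genuine divergence to report.
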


Moreover, a hemisystem can be constructed by taking a representative from each of the $A$-orbits and then taking the union of the orbits of these representatives under the action of $B$.

The following section is dedicated to the proof of the main theorem. We construct two subgroups of the orthogonal group and show that they satisfy the conditions of the AB-Lemma.

\section{Main result}
This section is dedicated to proving Theorem \ref{main} by applying the $AB$-Lemma to a suitable construction of the parabolic quadric.

Let $V$ be a $(2d+1)$-dimensional vector space over $\mathbb{F}_q$, for $q$ odd, $d \ge 2$, with a basis 
\[
\mathcal{B}= 
\{z, e_0, f_0, x, y, e_1, f_1, e_2, f_2 \ldots, e_{d-2}, f_{d-2} \}.
\]

Let $\beta$ be a nondegenerate bilinear form defined on $V$ such that 
\[
V = \langle z \rangle \perp \langle e_0, f_0 \rangle \perp \langle x, y \rangle \perp \langle e_1, f_1 \rangle \ldots \langle e_{d-2}, f_{d-2}\rangle
\]
where $( e_i, f_i )$ for $i \in \{0, \ldots, d-2\}$ are hyperbolic pairs and $\langle z \rangle$ and $\langle x, y \rangle$ are anisotropic subspaces. In particular let $\beta(e_i, f_i)=1$, $\beta(e_i, e_i) = \beta(f_i, f_i)=0$, and $\beta(z, z)=1$.
Let $\kappa$ be the associated quadratic form with $\beta$, defined by $\kappa(v)= \beta(v, v)/2$.

We distinguish two subspaces $W = \langle z, e_0, f_0 \rangle$ and $U = W^\perp = \langle x, y, e_1, f_1, \ldots, e_{d-2}, f_{d-2} \rangle$. The construction makes it clear that $W$ is a $3$-dimensional parabolic subspace of $V$, while $U$ is a $(2d-2)$-dimensional elliptic subspace of $V$. 

The automorphism group of $\mathcal{Q}(2d, q)$ is $\mathrm{P\Gamma O}_{2d+1}(q)$, however it is sufficient for us to consider the matrix group $\mathrm{O}_{2d+1}(q)$ as its action on subspaces is the same as that of $\mathrm{PO}_{2d+1}(q) \le \mathrm{P\Gamma O}_{2d+1}(q)$.
Thus we let $G=\mathrm{O}_{2d+1}(q)$.

Let $B = \Omega(W)$, where we recall $\Omega(W)$ is defined as in (\ref{defnX}). Moreover, take $\tau \in G$, where

\begin{equation} \label{tau}
\tau = \left(
\begin{array}{ccc|c}
 -1 & & &  \\
& 1 & & 0\\
& & 1 & \\
\hline
& 0 &  & I_{2d-2}   \\
\end{array}
\right),
\end{equation}
and $I_{2d-2}$ is the identity matrix. And lastly let
\[
  A = \langle  B,  \tau \rangle.
\]

We observe that $\Omega(W) \le A \le O(W)$ and so every element of $A$ (and also of $B$) has the form
\[
\left(
\begin{array}{c|c}
 g &  0  \\
\hline
0 & I_{2d-2}   \\
\end{array}
\right)
\]
for some $g \in O_3^\circ(q)$ with respect to $\kappa |_W$. Since each element of $A$ (and therefore of $B$) contains this identity block, the subgroups $A$ and $B$ are unchanged modulo scalar matrices: $A / (A \cap Z) \cong A$.

We make the following observation,
\begin{remark}
$B \cong PSL_2(q)$, since $B \cong \Omega^\circ_3(q) \cong PSL_2(q)$ by Lemma \ref{lem:isomorphisms}(1).
\end{remark}
We now give a few technical lemmas to aid in later proofs.

\begin{lemma}\label{lem:StabiliserOfNonDegenVector}
Let $T$ be a $3$-dimensional vector space.
Let $v$ be a non-singular vector of $T$, and let $T_1 = \langle v \rangle$, $T_2 = \langle v \rangle^\perp$ such that $T = T_1 \perp T_2$, then
\[
\mathrm{\Omega}(T_2) \le \mathrm{\Omega}(T)_{T_1} = \mathrm{\Omega}(T)_{T_2} \le \mathrm{O}(T_1) \times \mathrm{O}(T_2)
\]
and
\[
\mathrm{\Omega}(T)_v \cong \mathrm{\Omega}(T_2).
\]
\end{lemma}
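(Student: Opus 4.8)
The plan is to prove the statement in two pieces: first the chain of inclusions and equalities involving the stabiliser of $T_1$ (equivalently $T_2$), and then the isomorphism $\mathrm{\Omega}(T)_v \cong \mathrm{\Omega}(T_2)$ for the \emph{pointwise} vector stabiliser of the specific nonsingular vector $v$, being careful to distinguish this from the subspace stabiliser $\mathrm{\Omega}(T)_{T_1}$, which is generally larger.

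For the first display, since $v$ is nonsingular, $T_1 = \langle v \rangle$ is nondegenerate, and $T = T_1 \perp T_2$ is the orthogonal decomposition with $T_2 = T_1^\perp$. I would first note that stabilising $T_1$ is equivalent to stabilising $T_2 = T_1^\perp$, since any isometry commutes with taking perps; this gives $\mathrm{\Omega}(T)_{T_1} = \mathrm{\Omega}(T)_{T_2}$. The containment $\mathrm{\Omega}(T)_{T_1} \le \mathrm{O}(T_1) \times \mathrm{O}(T_2)$ is then Lemma \ref{OmegaStabiliser}(1) restricted to $\mathrm{\Omega}(T)$ (an element fixing $T_1$ fixes $T_2$, and its restrictions to each factor are isometries). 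The left-hand containment $\mathrm{\Omega}(T_2) \le \mathrm{\Omega}(T)_{T_1}$ follows from Lemma \ref{OmegaStabiliser}(2), which gives $\mathrm{\Omega}(T)_{T_1} \ge \mathrm{\Omega}(T_1) \times \mathrm{\Omega}(T_2) \ge \mathrm{\Omega}(T_2)$ (here $\mathrm{\Omega}(T_2)$ is identified with $1_{T_1} \oplus \mathrm{\Omega}(T_2)$ under the definition (\ref{defnX})).

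For the isomorphism, the key distinction is that $\mathrm{\Omega}(T)_v$ fixes $v$ \emph{itself}, not merely the line $\langle v \rangle$. The plan is to show that the restriction map sending $g \in \mathrm{\Omega}(T)_v$ to $g|_{T_2}$ is an isomorphism onto $\mathrm{\Omega}(T_2)$. Since any $g$ fixing $v$ fixes $T_1 = \langle v \rangle$ pointwise and preserves $T_2 = v^\perp$, we have $g = 1_{T_1} \oplus g|_{T_2}$ with $g|_{T_2} \in \mathrm{O}(T_2)$; the map is injective because $g$ is determined by $g|_{T_2}$. Surjectivity and the correct image require identifying exactly which elements of $\mathrm{O}(T_2)$ extend by $1_{T_1}$ to land in $\mathrm{\Omega}(T)$: by Lemma \ref{OmegaStabiliser}(3) applied with $U = T_2$, an element $X(T_2) = 1_{T_1} \oplus h$ lies in $\mathrm{\Omega}(T)$ precisely when $h \in \mathrm{\Omega}(T_2)$, so $\mathrm{\Omega}(T)_v \cong \{1_{T_1}\oplus h : h \in \mathrm{\Omega}(T_2)\} \cong \mathrm{\Omega}(T_2)$.

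The main obstacle, and the reason this differs from the subspace-stabiliser computation, is exactly this last identification: $\mathrm{\Omega}(T)_{T_1}^{T_2} = \mathrm{O}(T_2)$ (the full group) by Lemma \ref{OmegaStabiliser}(5) since $\dim(T_1)=1$ is the excluded case there, but the elements realising the non-$\mathrm{\Omega}$ part of $\mathrm{O}(T_2)$ must act on $T_1$ by $-1$ (to correct the determinant/spinor norm), hence do \emph{not} fix $v$. Thus passing from the subspace stabiliser to the vector stabiliser is precisely what cuts $\mathrm{O}(T_2)$ down to $\mathrm{\Omega}(T_2)$, and I would make this explicit by invoking Lemma \ref{OmegaStabiliser}(3) rather than (5). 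I expect the care needed to track that an element fixing $v$ pointwise acts as the identity (not merely a scalar) on $T_1$, together with the correct application of part (3), to be the crux of the argument.
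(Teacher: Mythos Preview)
Your proposal is correct and follows essentially the same route as the paper: both arguments use Lemma~\ref{OmegaStabiliser}(1) and (2) for the chain of inclusions, the observation that stabilising $T_1$ is equivalent to stabilising $T_2$, and then for the vector stabiliser note that fixing $v$ forces the $T_1$-component to be trivial so that $\mathrm{\Omega}(T_2) \le \mathrm{\Omega}(T)_v \le \mathrm{O}(T_2)$, with Lemma~\ref{OmegaStabiliser}(3) supplying $\mathrm{O}(T_2) \cap \mathrm{\Omega}(T) = \mathrm{\Omega}(T_2)$ to finish. Your additional commentary on why part (5) is the excluded case and on the subspace-versus-vector distinction is helpful context but not logically needed beyond what the paper already does.
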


\begin{proof}
Since $v$ is non-singular, $T_1$ and $T_2$ are nondegenerate, thus by Lemma \ref{OmegaStabiliser}(2) and (1), $\mathrm{\Omega}(T_1) \times \mathrm{\Omega}(T_2)  \le \mathrm{\Omega}(T)_{T_2} \le \mathrm{O}(T)_{T_2} =  \mathrm{O}(T_1) \times \mathrm{O}(T_2)$. Since $\dim(T_1)=1$, $\mathrm{O}(T_1) = \{\pm 1 \}$ and $\mathrm{SO}(T_1) = \mathrm{\Omega}(T_1) = 1$. Fixing $T_2$ means also fixing $T_1$, so $\mathrm{\Omega}(T)_{T_1} = \mathrm{\Omega}(T)_{T_2}$, and the first part follows. If in addition, $v$ is fixed, then $\mathrm{\Omega}(T)_v^{T_1} = 1$, and so  $ \mathrm{\Omega}(T_2)   \le \mathrm{\Omega}(T)_v \le \mathrm{O}(T_2)$. Now by Lemma \ref{OmegaStabiliser}(3), it follows that $\mathrm{O}(T_2) \cap \mathrm{\Omega}(T) =  \mathrm{\Omega}(T_2)$, so the second part follows.
\end{proof}

Note that $T_2$ in Lemma \ref{lem:StabiliserOfNonDegenVector} is a nondegenerate two dimensional subspace and hence $\mathrm{O}(T_2)$ and $\mathrm{\Omega}(T_2)$ could be of either $+$ or $-$ type, depending on $T_2$ itself.

Observe that $W$ is a $3$-dimensional subspace of $V$ with the same properties as $T$ in Lemma \ref{lem:StabiliserOfNonDegenVector}.
In particular, the stabiliser in $\mathrm{\Omega}(W)$ of a nonsingular vector $v$ is isomorphic to $\mathrm{\Omega}_2^\pm(q)$. Hence it is useful to know exactly how subgroups of $\mathrm{O}_2^\pm(q)$ act. Recall the definition of $V_\alpha$ in (\ref{Valpha}).

\begin{lemma}\label{ReflectionTypes}
Let $\Delta_1$ and $\Delta_2$ be the two orbits of $\mathrm{\Omega}_2^\epsilon(q)$ on $V_\alpha$, for $\alpha \neq 0$, and let $g \in \mathrm{O}^\epsilon_2(q)\backslash \mathrm{SO}^\epsilon_2(q)$. Then $\Delta_i^g = \Delta^{\hphantom{g}}_i$ precisely when $g$ fixes a point of $V_\alpha$, and $\Delta_i^g = \Delta^{\hphantom{g}}_j$ otherwise, for $i \in \{1,2 \}$, $j \in \{1,2\} \backslash \{i\}$.
\end{lemma}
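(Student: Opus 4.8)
### Proof Proposal for Lemma \ref{ReflectionTypes}

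The plan is to work with the explicit coordinate description afforded by Lemma \ref{lem:OmegaElements}, treating the $+$ and $-$ cases uniformly where possible and separately where the arithmetic forces it. First I would fix the standard hyperbolic model for $\epsilon = +$: with $\kappa(x_1,x_2) = x_1 x_2$, the set $V_\alpha$ consists of all $(t, \alpha/t)$ with $t \in \mathbb{F}_q^*$, the group $\mathrm{\Omega}_2^+(q)$ acts by $(t, \alpha/t) \mapsto (\gamma^2 t, \gamma^{-2}\alpha/t)$, so the two $\mathrm{\Omega}_2^+(q)$-orbits $\Delta_1, \Delta_2$ on $V_\alpha$ are distinguished by whether the first coordinate $t$ is a square or a nonsquare in $\mathbb{F}_q^*$ (this matches the orbit count in Lemma \ref{OmegaOrbits}(3)). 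An element $g \in \mathrm{O}_2^+(q)\setminus\mathrm{SO}_2^+(q)$ is antidiagonal, $g\colon (t, \alpha/t)\mapsto (\gamma \alpha/t, \gamma^{-1} t)$ for some $\gamma \in \mathbb{F}_q^*$; I would then observe that $g$ sends an orbit to itself iff multiplication by $\gamma\alpha$ preserves the square/nonsquare status of all field elements, i.e. iff $\gamma\alpha$ is a square, and that $g$ fixes a point of $V_\alpha$ iff the equation $\gamma\alpha/t = t$, equivalently $t^2 = \gamma\alpha$, has a solution, i.e. iff $\gamma\alpha$ is a square. These two conditions coincide, which is exactly the claim in the $+$ case.

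For $\epsilon = -$ I would pass to the quadratic extension $\mathbb{F}_{q^2}$: a nondegenerate elliptic plane over $\mathbb{F}_q$ can be modelled as $\mathbb{F}_{q^2}$ with $\kappa(v) = \lambda \cdot v^{q+1} = \lambda \cdot N_{\mathbb{F}_{q^2}/\mathbb{F}_q}(v)$ for a suitable fixed scalar $\lambda$, so that $V_\alpha$ is a norm-coset, $\mathrm{SO}_2^-(q) \cong \mathbb{Z}_{q+1}$ acts as multiplication by the norm-one subgroup, and $\mathrm{\Omega}_2^-(q) \cong \mathbb{Z}_{(q+1)/2}$ acts as multiplication by the index-two subgroup of squares of norm-one elements; its two orbits on $V_\alpha$ are the two cosets of that subgroup. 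An element $g \in \mathrm{O}_2^-(q)\setminus\mathrm{SO}_2^-(q)$ acts as $v \mapsto \mu v^q$ for a scalar $\mu$ of appropriate norm; I would compute that $g$ preserves each $\mathrm{\Omega}_2^-(q)$-orbit iff $\mu$ lies in the relevant square coset, and that $g$ fixes a point of $V_\alpha$ iff $\mu v^q = v$ is solvable, i.e. iff $\mu = v^{1-q}$ for some $v$ with $\kappa(v) = \alpha$ — and again these two solvability conditions are seen to be equivalent by a short norm/order computation in the cyclic group $\mathbb{F}_{q^2}^*$.

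An alternative, more uniform route — which I would at least mention — avoids coordinates: since $\mathrm{\Omega}_2^\epsilon(q)$ has index $2$ in $\mathrm{SO}_2^\epsilon(q)$, which has index $2$ in $\mathrm{O}_2^\epsilon(q)$, the element $g$ together with $\mathrm{SO}_2^\epsilon(q)$ generates $\mathrm{O}_2^\epsilon(q)$; conjugation by $g$ either fixes both $\mathrm{\Omega}_2^\epsilon(q)$-orbits or swaps them, and one checks that $g$ normalizes the $\mathrm{\Omega}$-orbit structure "coherently." The cleanest phrasing: $g^2 \in \mathrm{SO}_2^\epsilon(q)$ is an orientation-preserving rotation, and whether $g$ swaps the two $\mathrm{\Omega}$-orbits is governed by the same discrete invariant (a square-class of a scalar) that governs whether the reflection $g$ has a fixed vector of norm $\alpha$; the fixed line of a reflection $g$ is anisotropic of a definite type, and $V_\alpha$ meets that fixed line iff $\alpha$ lies in the value-set of $\kappa$ on it, which is precisely the square-class condition.

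The main obstacle I anticipate is keeping the bookkeeping of square-classes consistent between the two cases and, in the elliptic case, correctly pinning down which scalars $\mu$ actually occur for elements of $\mathrm{O}_2^-(q)\setminus \mathrm{SO}_2^-(q)$ (the norm condition on $\mu$) so that the equivalence "$\mu$ in the square coset $\iff$ $\mu v^{q-1}$ solvable with $\kappa(v)=\alpha$" comes out cleanly rather than off by a square-class; once the models are set up correctly this is a short finite-field computation, but it is the step where an error is most likely to creep in. The $+$ case is genuinely routine given Lemma \ref{lem:OmegaElements}; the $-$ case carries all the weight.
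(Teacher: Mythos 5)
Your proposal is correct, but it takes a genuinely different route from the paper. The paper's proof is coordinate-free: it observes that the action of $\mathrm{O}_2^\pm(q)$ on $V_\alpha$ (a set of size $q\mp 1$ on which the cyclic subgroup $\mathrm{SO}_2^\pm(q)$ acts regularly) is permutation isomorphic to the dihedral group acting on the vertices of a regular $(q\mp 1)$-gon, and then invokes the elementary fact that a reflection of an even polygon either passes through two vertices (fixing two points and preserving the two orbits of the rotation subgroup of index two) or through two edge midpoints (fixing no point and swapping those orbits). This treats the $+$ and $-$ cases uniformly and requires no explicit model of the elliptic plane. Your argument instead verifies the same dichotomy by direct computation in two explicit models: the split form $\kappa(x_1,x_2)=x_1x_2$ for $\epsilon=+$, where both ``$g$ preserves the orbits'' and ``$g$ has a fixed vector in $V_\alpha$'' reduce to the same square-class condition on $\gamma\alpha$, and the norm form on $\mathbb{F}_{q^2}$ for $\epsilon=-$, where (writing $C=\ker N$, using $c^{1-q}=c^2$ for $c\in C$) both conditions reduce to $\mu\in v_0^{1-q}C^2$. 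I checked these computations and they do close up as you anticipate, including the norm condition $N(\mu)=1$ on the scalars occurring in $\mathrm{O}_2^-(q)\setminus\mathrm{SO}_2^-(q)$; your flagged worry about the square-class bookkeeping in the elliptic case is real but resolvable. What your route buys is complete explicitness and independence from the permutation-isomorphism claim; what it costs is a case split and the need to introduce the $\mathbb{F}_{q^2}$ model, which the paper never sets up (Lemma~\ref{lem:OmegaElements} only covers the hyperbolic plane). Your third paragraph's ``alternative uniform route'' is essentially the paper's actual argument in embryonic form.
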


\begin{proof}
By Lemma \ref{lem:isomorphisms}, $\mathrm{O}^\pm_2(q)$ is dihedral of order $2(q \mp 1)$, while $\mathrm{SO}^\pm_2(q)$ and $\mathrm{\Omega}^\pm_2(q)$ are cyclic. Moreover, by Lemma \ref{OmegaOrbits}, $\mathrm{O}^\pm_2(q)$ is transitive on $V_\alpha$, while $\mathrm{\Omega}^\pm_2(q)$ has two orbits, for $\alpha \neq 0$.
Now, $|\mathrm{O}^\pm_2(q):\mathrm{SO}^\pm_2(q)|=2$ and $|\mathrm{SO}^\pm_2(q): \mathrm{\Omega}^\pm_2(q)|=2$, and so $\mathrm{O}_2^\pm(q)$ has the following subgroup structure
\begin{center}
\begin{tikzpicture}[scale=.7]
  \node (one) at (0,2) {$\mathrm{O}_2^\pm(q) = \langle s, t \colon |s|= q\mp 1, \quad |t|= 2, \quad tst = s^{-1} \rangle$};
  \node (a) at (-3,0) {$\mathrm{SO}_2^\pm(q) = \langle s \rangle$};
  \node (b) at (0,0) {$\langle s^2, t \rangle$};
  \node (c) at (3,0) {$\langle s^2, st \rangle$};
  \node (zero) at (0,-2) {$\mathrm{\Omega}_2^\pm(q) = \langle s^2 \rangle$};
  \draw (zero) -- (a) -- (one) -- (b) -- (zero) -- (c)  -- (one);
\end{tikzpicture}\\
\end{center}

Note that the action of $\mathrm{O}_2^\pm(q)$ on $V_\alpha$ is permutation isomorphic to the action of $D_{2(q\mp1)}$ on the vertices of a regular $(q\mp1)$-gon (which has an even number of vertices for $q$ odd), and moreover  $D_{2(q\mp1)}$ has two conjugacy classes of reflections: reflections in an axis through opposite vertices and reflections in an axis through opposite midpoints. 
The first type fix exactly two points and we shall call such reflections ``hyperbolic'', while the second fix no points and shall be called ``elliptic''.
Considering the action of the dihedral group, it is clear that a hyperbolic reflection preserves the orbits of the cyclic subgroup $\langle s^2 \rangle$ which rotates the vertices, while an elliptic reflection interchanges the orbits. Since $\mathrm{SO}_2^\pm(q)$ is cyclic, an element $\sigma \in \mathrm{O}_2^\pm(q) \backslash \mathrm{SO}_2^\pm(q)$ must be a reflection. The existence of a fixed point on $V_\alpha$ determines if $\sigma$ is hyperbolic or elliptic, and hence determines its action on the orbits of $\mathrm{\Omega}_2^\pm(q)$ on $V_\alpha$.
\end{proof}

A key observation arising from the previous lemma is that although $O_2^\pm(q)$ is permutationally isomorphic to $D_{2(q\mp1)}$, a specific reflection $\sigma$ may be hyperbolic in its action upon $V_\alpha$, but elliptic in its action upon $V_{\alpha'}$, for $\alpha \neq \alpha'$ and $\alpha, \alpha' \neq 0$.

\begin{lemma}\label{lem:BnormalinA}
$B$ is a proper normal subgroup of $A$.
\end{lemma}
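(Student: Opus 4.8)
The plan is to realise $A$ as a group squeezed between $\mathrm{\Omega}(W)$ and $\mathrm{O}(W)$, and then to exploit the fact recorded in Section \ref{Background} that $\mathrm{\Omega}(W)$ is \emph{by definition} the derived subgroup of $\mathrm{O}(W)$.

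First I would check that $A \le \mathrm{O}(W)$. The matrix $\tau$ fixes each of $x, y, e_1, f_1, \ldots, e_{d-2}, f_{d-2}$, so it acts as the identity on $U = W^\perp$; and it sends $z \mapsto -z$ while fixing $e_0$ and $f_0$, so it stabilises $W$ and restricts there to the reflection in the non-singular vector $z$. In the notation of (\ref{defnX}) this says $\tau = (\tau|_W) \oplus 1_U \in \mathrm{O}(W)$. Since also $B = \mathrm{\Omega}(W) \le \mathrm{O}(W)$, we obtain
\[
\mathrm{\Omega}(W) = B \le A = \langle B, \tau \rangle \le \mathrm{O}(W).
\]

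For normality, recall that $\mathrm{\Omega}(W)$ is the derived subgroup of $\mathrm{O}(W)$, hence a characteristic — in particular normal — subgroup of $\mathrm{O}(W)$. Any subgroup of $\mathrm{O}(W)$ containing a normal subgroup of $\mathrm{O}(W)$ a fortiori normalises it, so taking that subgroup to be $A$ gives $B \trianglelefteq A$. (Equivalently, conjugation by $\tau|_W$ is an automorphism of $\mathrm{O}(W)$ and therefore fixes its derived subgroup setwise.) Finally, to see that $B \le A$ is proper, note that $\tau|_W$ has matrix $\mathrm{diag}(-1,1,1)$ with respect to $\{z, e_0, f_0\}$, so $\det(\tau|_W) = -1$ and hence $\tau \notin \mathrm{SO}(W) \supseteq \mathrm{\Omega}(W) = B$; as $\tau \in A$, this forces $B \ne A$.

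There is no genuine obstacle here; the only two points needing care are verifying that $\tau$ really does stabilise $W$ and centralise $W^\perp$ (so that $A$ lands inside $\mathrm{O}(W)$, where the "derived subgroup" description of $\mathrm{\Omega}(W)$ is available), and the sign of $\det(\tau|_W)$, which is what makes the containment strict.
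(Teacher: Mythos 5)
Your proposal is correct and follows essentially the same route as the paper: $B=\mathrm{\Omega}(W)$ is the derived subgroup of $\mathrm{O}(W)$, hence normal there, and since $A=\langle B,\tau\rangle\le\mathrm{O}(W)$ this gives $B\trianglelefteq A$, with properness from $\det(\tau|_W)=-1$. You merely make explicit the containment $A\le\mathrm{O}(W)$, which the paper records just before the lemma.
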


\begin{proof}
Note that the derived subgroup of a group is a normal, hence $B = \mathrm{\Omega}(W) \trianglelefteq \mathrm{O}(W)$, and thus $B \trianglelefteq A$. Moreover, $\det( \tau)=-1$, so $ \tau \not \in  B$ and $ B < A$. It then follows that $B \triangleleft A$.
\end{proof}

We now consider the action of the two subgroups $A$ and $B$ on the points and maximals of $\mathcal{Q}(2d, q)$. We recall that the elements of $\mathcal{Q}(2d, q)$ are totally singular subspaces, and that $A$ and $B$ act on vectors of $W$ (not just subspaces).
Note that $V = W \perp U$, hence every $v \in V$ can be expressed as $v = w + u$ for some $w \in W$ and $u \in U$. Moreover, $A$ and $B$ fix every vector in $U$, so it is sufficient to consider the action of $A$ and $B$ on $w$ to investigate their action on $v$. We recall the definition of $\tau$ in (\ref{tau}).

\begin{lemma}\label{lem:pointsfixed}
Let $P$ be a point of $\mathcal{Q}(2d, q)$. Then there exists some $g \in B$ such that $P^g = P^\tau$.
\end{lemma}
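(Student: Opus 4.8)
The plan is to reduce the statement about points of $\mathcal{Q}(2d,q)$ (totally singular $1$-spaces of $V$) to a statement about how $B = \mathrm{\Omega}(W)$ and $\tau$ act on the $W$-component of a point. Write $P = \langle v \rangle$ with $\kappa(v) = 0$, and decompose $v = w + u$ with $w \in W$, $u \in U$. Since $A$ and $B$ act as the identity on $U$, we have $P^\tau = \langle w^\tau + u \rangle$ and $P^g = \langle w^g + u \rangle$ for $g \in B$, so it suffices to find $g \in B$ with $w^g = w^\tau$ (not merely $\langle w^g + u \rangle = \langle w^\tau + u \rangle$ up to scalar — but in fact exact equality of the $W$-component is what we should aim for, since a nontrivial scalar would also rescale $u$). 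So the goal becomes: for every $w \in W$, there is $g \in \mathrm{\Omega}(W)$ with $w^g = w^\tau$. Equivalently, $w$ and $w^\tau$ lie in the same $\mathrm{\Omega}(W)$-orbit on vectors of $W$.

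Next I would identify what $\tau$ does to $W = \langle z, e_0, f_0\rangle$: it is the reflection $z \mapsto -z$, $e_0 \mapsto e_0$, $f_0 \mapsto f_0$, i.e. the reflection in the nonsingular vector $z$ (negating $\langle z\rangle$ and fixing $\langle z\rangle^\perp \cap W = \langle e_0, f_0\rangle$ pointwise). Crucially $\tau$ preserves $\kappa|_W$, so $\kappa(w^\tau) = \kappa(w)$ for all $w \in W$; thus $w$ and $w^\tau$ both lie in $W_\alpha$ for $\alpha = \kappa(w)$. Now invoke Lemma \ref{OmegaOrbits}(2): $\mathrm{\Omega}_3^\circ(q)$ is transitive on $W_\alpha$ for every $\alpha \neq 0$, and has exactly two orbits on $W_0$. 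So the only obstruction is the case $\alpha = 0$ — i.e. the case where $P$ is actually relevant, since $P$ being a point of the quadric forces $\kappa(v) = 0$, hence $\kappa(w) = -\kappa(u)$, which need not be zero. So I split into two cases: if $\kappa(w) \neq 0$, transitivity of $B$ on $W_{\kappa(w)}$ immediately gives the required $g$; the interesting case is $\kappa(w) = 0$ (including $w = 0$, which is trivial since then $w^\tau = 0 = w$).

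For the case $\kappa(w) = 0$ with $w \neq 0$: here $\langle w\rangle$ is a totally singular (hence singular nonzero) vector of the $3$-dimensional parabolic space $W$, and $B$ has two orbits on $W_0$. I would show that $\tau$ fixes each of these two orbits setwise — then $w^\tau$ lies in the same $B$-orbit as $w$ and we are done. The cleanest way: the two $\mathrm{\Omega}_3^\circ(q)$-orbits on singular vectors are swapped or fixed by an element of $\mathrm{O}_3^\circ(q) \setminus \mathrm{SO}_3^\circ(q)$ according to a parity/spinor-type invariant; since $\tau = \mathrm{diag}(-1,1,1)$ is a reflection, I should check directly whether it preserves the two orbits. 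Concretely, a singular vector of $W$ can be written (up to scalar) as $\lambda z + \mu e_0 + \nu f_0$ with $\tfrac{1}{2}\lambda^2 + \mu\nu = 0$; $\tau$ sends it to $-\lambda z + \mu e_0 + \nu f_0$, and one checks this is related to the original by an element of $\mathrm{\Omega}(W)$ (e.g. by exhibiting an explicit product of two reflections in $W$ whose action on $\langle z,e_0,f_0\rangle$ matches $\tau$ composed with a suitable $\mathrm{\Omega}$-element, or by comparing against the explicit $\mathrm{O}_2^{\pm}$-description via Lemma \ref{lem:OmegaElements} after restricting to $\langle z\rangle^\perp$). Alternatively, and perhaps most efficiently, note that $-\tau = \mathrm{diag}(1,-1,-1) \in \mathrm{O}(W)$ acts on $\langle e_0, f_0\rangle$ as $-1$ and fixes $z$; comparing determinants and the $\mathrm{\Omega}$ versus $\mathrm{SO}$ distinction on $\langle e_0,f_0\rangle$ via Lemma \ref{lem:OmegaElements}, one can decide the orbit action.

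The main obstacle is precisely this last case: verifying that $\tau$ stabilises each of the two $B$-orbits on the singular vectors of $W$, rather than interchanging them. Everything else is bookkeeping (reducing to the $W$-component, noting $\tau$ preserves $\kappa|_W$, and quoting Lemma \ref{OmegaOrbits}(2)). I expect the resolution to come from an explicit small computation in $\mathrm{O}_3^\circ(q) \cong \mathrm{O}_2^{\pm}(q)$-type arithmetic, or from a clean conceptual argument that the reflection $\tau$ (fixing a nonsingular hyperplane of $W$) lies in the same coset behaviour as a product of reflections already inside $\mathrm{\Omega}(W)$ — note one should be slightly careful because $\mathrm{\Omega}_3^\circ(q)$ has index $4$ in $\mathrm{O}_3^\circ(q)$, so "$\tau \notin \mathrm{\Omega}(W)$" does not by itself decide the orbit question, and this is exactly where the genuine content lies.
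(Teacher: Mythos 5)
Your reduction to the $W$-component, the observation that $\tau$ preserves $\kappa|_W$, and the case split on $\kappa|_W(w)\neq 0$ (transitivity via Lemma \ref{OmegaOrbits}(2)) versus $\kappa|_W(w)=0$ all match the paper's proof. But at the case $\kappa|_W(w)=0$ you stop at exactly the point where the work is: you assert that ``one checks'' that $w$ and $w^\tau$ lie in the same $\mathrm{\Omega}(W)$-orbit, and you yourself flag that this is ``exactly where the genuine content lies'' --- and then you do not supply that content. Neither of your two suggested routes is carried out, and the second one (comparing with $-\tau=\mathrm{diag}(1,-1,-1)$) is shaky: whether $-I$ lies in $\mathrm{\Omega}_2^+(q)$ depends on $q \bmod 4$, and $-\tau$ sits in a different coset of $\mathrm{SO}(W)$ than $\tau$, so determinant/coset bookkeeping alone does not decide how $\tau$ permutes the two orbits on $W_0$. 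Since $\mathrm{\Omega}_3(q)$ has index $4$ in $\mathrm{O}_3(q)$, as you note, there really is something to verify here; as written, the proposal is a correct plan with its central step missing.

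For comparison, the paper closes this case as follows: write $w=z+v$ with $v=\gamma_2e_0+\gamma_3f_0$ and set $v'=\gamma_2e_0-\gamma_3f_0$, so that $\langle v\rangle^\perp\cap W=\langle z,v'\rangle$. Since $\tau$ fixes $v$ and $\kappa(v)\neq 0$, Lemma \ref{lem:StabiliserOfNonDegenVector} gives $B_v=\mathrm{\Omega}(\langle z,v'\rangle)\cong\mathrm{\Omega}_2^{\pm}(q)$, and $\tau$ induces on $\langle z,v'\rangle$ a reflection fixing $v'$. Because $\kappa(v')=\tfrac12=\kappa(z)$, this reflection fixes a point of $V_{1/2}$, so Lemma \ref{ReflectionTypes} says it preserves each $\mathrm{\Omega}_2^{\pm}(q)$-orbit on $V_{1/2}$; hence $-z=z^\tau$ lies in the same $B_v$-orbit as $z$, giving $g\in B_v\le B$ with $w^g=w^\tau$. (Incidentally, your own ``setwise'' framing admits a shorter completion that the paper does not use: $\tau$ normalises $B$, so it permutes the two equal-size $B$-orbits on $W_0$, and since $\tau$ fixes $e_0\in W_0$ it cannot interchange them. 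Had you made that observation, your argument would be complete and arguably cleaner than the paper's; but you did not, so the gap stands.)
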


\begin{proof}
$P = \langle p \rangle$ for some $p \in V$, such that $p = w + u$ for $w \in W$ and $u \in U$. Now, $p^\tau = w^\tau + u$, since $A$ fixes $u$, and hence we require $g \in B$ such that $w^g = w^\tau$.

Let $w = \gamma_1 z + \gamma_2 e_0 + \gamma_3 f_0$. If $\gamma_1 = 0$ then $w^{1_G} = w^\tau$, where ${1_G} \in B$ is the identity element. If $\gamma_1 \neq 0$, then without loss of generality we may take $\gamma_1 = 1$, since $P = \langle p \rangle = \langle \gamma_1^{-1} p \rangle$.

Consider now $\gamma_1=1$ and  $\kappa|_W(w) = \alpha$.
If $\alpha \neq 0$ then by Lemma \ref{OmegaOrbits}(2) there exists some $g \in B$ such that $w^{g} = w^{\tau}$, and hence $p^g = p^\tau$ and $P^g = P^\tau$.

Consider instead $\alpha = 0$, then by Lemma \ref{OmegaOrbits}(2), there are two orbits on $W_\alpha$ under $B$. Set $v = \gamma_2 e_0 + \gamma_3 f_0$ and $v' = \gamma_2 e_0 - \gamma_3 f_0$. Then $\tau$ fixes $v$, and by Lemma \ref{lem:StabiliserOfNonDegenVector}, the stabiliser of $v$ in $B$ is $\Omega(\langle v \rangle^\perp)  = \Omega_2^\pm(q)$. Observe that $\tau$ fixes $v'$ in $\langle v \rangle^\perp$, but not $z$. Note that $p = z + v$, $\kappa(p)=0$, and $\kappa(z)=\frac{1}{2}$ imply that $\kappa(v)= \gamma_2 \gamma_3 = -\frac{1}{2}$, which in turn implies that $\kappa(v') = - \gamma_2 \gamma_3 = \frac{1}{2}$. Now $v', z \in \langle v \rangle^\perp$ and $v'$ is fixed by $\tau$, hence by Lemma \ref{ReflectionTypes} there exists some $g \in \Omega^\pm_2(q) \le  B$ such that $w^{g} = w^\tau$ and hence $p^g = p^\tau$ and $P^g = P^\tau$.
\end{proof}

\begin{corollary}
The orbits of $A$ and $B$ on the points are the same.
\end{corollary}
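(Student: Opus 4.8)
The plan is to deduce this immediately from Lemma \ref{lem:pointsfixed} together with the normality of $B$ in $A$ established in Lemma \ref{lem:BnormalinA}. First I would record the easy inclusion: since $B \le A$, every $B$-orbit on the points is contained in a single $A$-orbit, so it suffices to show that no $A$-orbit is a proper union of $B$-orbits, i.e.\ that $P^A = P^B$ for every point $P$ of $\mathcal{Q}(2d,q)$.

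To see this, use that $A = \langle B, \tau \rangle$ with $B \trianglelefteq A$. From the explicit matrix (\ref{tau}) one checks $\tau^2 = 1_G$, so $\tau B = B\tau$ gives $A = B \cup B\tau$; hence $P^A = P^B \cup P^{B\tau} = P^B \cup (P^\tau)^B$. By Lemma \ref{lem:pointsfixed} there is some $g \in B$ with $P^g = P^\tau$, so $(P^\tau)^B = (P^g)^B = P^B$, and therefore $P^A = P^B$. Since $P$ was an arbitrary point, $A$ and $B$ have the same orbits on the points. (Equivalently, one may phrase this as: $A/B$ is generated by the image of $\tau$, each $A$-orbit is a union of $B$-orbits on which $A/B$ acts, and Lemma \ref{lem:pointsfixed} says $\tau$ maps every $B$-orbit of points to itself, so there is nothing to fuse.)

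There is essentially no obstacle here: all the real work — showing that $\tau$ does not move a point outside its $B$-orbit, handling separately the cases $\gamma_1 = 0$, $\gamma_1 \ne 0$ with $\kappa|_W(w)\ne 0$, and the singular case via Lemma \ref{ReflectionTypes} — has already been done in Lemma \ref{lem:pointsfixed}. The only point requiring any care is the bookkeeping that $A = B \cup B\tau$ (using $\tau^2 = 1_G$ and $B \trianglelefteq A$), after which the conclusion is formal.
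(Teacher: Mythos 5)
Your argument is correct and is exactly the deduction the paper intends: the corollary is stated without proof as an immediate consequence of Lemma \ref{lem:pointsfixed}, and your bookkeeping ($\tau^2 = 1_G$, $B \trianglelefteq A$, hence $A = B \cup B\tau$ and $P^A = P^B \cup (P^\tau)^B = P^B$) is the standard way to make that implication explicit.
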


Note that we are not forced to fix $v$ in the previous proof, it simply proved convenient in demonstrating the existence of an appropriate group element $g$ in $B$. However, in forthcoming proofs we will seek to show that $\tau$ is unique in its action on maximals, and it will be necessary to fix certain subspaces according to the action of $\tau$.

\begin{lemma} \label{lem:NonEmptyProjection}
Let $M$ be a maximal totally singular subspace in $V$, and let $M'$ be a maximally totally singular subspace in $\langle z \rangle^\perp$. Then $M$ projects nontrivially onto $\langle z \rangle$ and $M'$ projects non-trivially onto $\langle e_0, f_0 \rangle$.
\end{lemma}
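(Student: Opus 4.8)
The plan is a short dimension count based on Witt indices. Both assertions reduce to the observation that the orthogonal complements appearing here have strictly smaller Witt index than the ambient space, so they cannot contain a totally singular subspace as large as the relevant maximal.

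First I would record the types and Witt indices of $\langle z \rangle^\perp$ and $U$. Since $\langle z \rangle$ is anisotropic of dimension $1$, its perp $\langle z \rangle^\perp$ is a nondegenerate subspace of dimension $2d$; from the orthogonal decomposition $\langle z \rangle^\perp = \langle e_0, f_0 \rangle \perp U$, with $\langle e_0, f_0 \rangle$ hyperbolic of Witt index $1$ and $U$ elliptic of dimension $2d-2$ and Witt index $d-2$, it follows that $\langle z \rangle^\perp$ is elliptic of Witt index $d-1$. In particular $\langle z \rangle^\perp$ contains no totally singular subspace of dimension $d$, and $U$ contains no totally singular subspace of dimension $d-1$.

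For the first statement, note that $V = \langle z \rangle \oplus \langle z \rangle^\perp$, and the projection of $M$ onto $\langle z \rangle$ along $\langle z \rangle^\perp$ is trivial precisely when $M \le \langle z \rangle^\perp$. Since $M$ is a maximal of $\mathcal{Q}(2d, q)$ it is totally singular of dimension $d$, which exceeds the Witt index $d-1$ of $\langle z \rangle^\perp$; hence $M \not\le \langle z \rangle^\perp$, so $M$ projects nontrivially onto $\langle z \rangle$. For the second statement, $\langle z \rangle^\perp = \langle e_0, f_0 \rangle \oplus U$, and the projection of $M'$ onto $\langle e_0, f_0 \rangle$ along $U$ is trivial precisely when $M' \le U$. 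As $M'$ is a maximal totally singular subspace of the elliptic space $\langle z \rangle^\perp$, it has dimension $d-1$, exceeding the Witt index $d-2$ of $U$; hence $M' \not\le U$, as required.

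I do not expect a genuine obstacle: the argument is purely about Witt indices. The only point requiring care is correctly reading off from the stated orthogonal decomposition that $\langle z \rangle^\perp$ is elliptic of Witt index $d-1$ and that $U$ is elliptic of Witt index $d-2$, and then noting $d-1 < d$ and $d-2 < d-1$ hold for every $d \ge 2$ (so that the relevant containments are genuinely impossible, including the boundary case $d=2$ where $U$ is anisotropic).
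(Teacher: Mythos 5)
Your proof is correct and follows essentially the same route as the paper's: identify $\langle z \rangle^\perp$ and $U$ as elliptic of Witt index $d-1$ and $d-2$ respectively, and conclude that a totally singular subspace of dimension $d$ (resp.\ $d-1$) cannot be contained in them, so the relevant projection is nontrivial. The only cosmetic difference is that you read the elliptic type of $\langle z \rangle^\perp$ off the decomposition $\langle e_0, f_0 \rangle \perp U$ rather than directly from the hyperbolic pairs together with the anisotropic plane $\langle x, y \rangle$, which changes nothing of substance.
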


\begin{proof}
Consider $\langle z \rangle^\perp = \langle e_0, f_0 \rangle \perp \langle x, y \rangle \perp \langle e_1, f_1 \rangle \ldots \langle e_{d-1}, f_{d-1} \rangle$. Since $\langle e_i, f_i \rangle$ are hyperbolic planes, and $\langle x, y \rangle$ is an anisotropic subspace, $\langle z \rangle^\perp$ is an elliptic subspace under the form $\beta|_{\langle z \rangle^\perp}$. Since $\langle z \rangle^\perp$ has dimension $2d$ it has Witt index $d-1$. However a maximal $M$ in $V$ has dimension $d$, so $M$ cannot be entirely contained in $\langle z \rangle^\perp$. Thus there is a nonempty projection of $M$ onto $\langle z \rangle$.
Similarly, $U$ is an elliptic subspace of dimension $2d-2$, with Witt index $d-2$. Hence $M'$ is not completely contained in $U$ and must have a nonempty projection onto $\langle e_0, f_0 \rangle$.
\end{proof}

\begin{lemma}\label{lem:Basis}
Let $M$ be a maximal. There exists a basis $b_1, b_2, b_3, \ldots, b_d$ of $M$ such that
\[
b_1 = z + u_1, \quad b_2 = e_0 + u_2, \quad b_3 = f_0 + u_3, \quad b_i = u_i \quad \text{ for } i \in \{4, \ldots , d\},
\]
or
\[
b_1 = z + \lambda f_0 + u_1, \quad b_2 = e_0 + \mu f_0 + u_2, \quad b_i = u_i \quad \text{ for } i \in \{3, \ldots , d\},
\]
or
\[
b_1 = z + \lambda e_0 + u_1, \quad b_2 = f_0 + u_2, \quad b_i = u_i \quad \text{ for } i \in \{3, \ldots , d\},
\]
where in each case $u_i \in U$ for $i \in \{1, \ldots d\}$.
\end{lemma}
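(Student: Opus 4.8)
The plan is to study $M$ through its projection onto $W=\langle z,e_0,f_0\rangle$ along the complement $U=W^\perp$; the group theory plays no role here, so this is purely the linear algebra of how a $d$-dimensional totally singular subspace sits relative to the orthogonal decomposition $V=W\perp U$. I would write $\pi\colon V\to W$ for the projection with kernel $U$, and set $k=\dim\pi(M)$. Since $\ker(\pi|_M)=M\cap U$, this gives $\dim(M\cap U)=d-k$.

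The first step is to pin down $k$. Clearly $k\le\dim W=3$. Conversely, $M\cap U$ is a totally singular subspace of the $(2d-2)$-dimensional elliptic space $U$, whose Witt index is $d-2$; hence $d-k=\dim(M\cap U)\le d-2$, so $k\ge 2$. Thus $k\in\{2,3\}$, and I expect $k=3$ to yield the first form of the basis and $k=2$ to yield the second and third. I would also record at this point, via Lemma \ref{lem:NonEmptyProjection}, that $\pi(M)\not\subseteq\langle e_0,f_0\rangle$ (this is exactly the assertion that $M$ projects nontrivially onto $\langle z\rangle$), and hence that $\pi(M)$ surjects onto $\langle z\rangle$.

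For $k=3$, one has $\pi(M)=W$, so I can choose $b_1,b_2,b_3\in M$ with $\pi(b_1)=z$, $\pi(b_2)=e_0$, $\pi(b_3)=f_0$; then each $b_i$ is the corresponding basis vector plus a vector of $U$, and completing with any basis $b_4,\dots,b_d$ of $M\cap U$ (which has dimension $d-3$) gives the first form. For $k=2$, $\pi(M)$ is a $2$-dimensional subspace of $W$ not contained in $\langle e_0,f_0\rangle$, so $\pi(M)\cap\langle e_0,f_0\rangle$ is a line $\langle v_0\rangle$. I would split according to whether $v_0$ has nonzero $e_0$-coordinate. If it does, rescale so that $v_0=e_0+\mu f_0$; picking $w\in\pi(M)$ with $z$-coordinate $1$ (possible since $\pi(M)$ surjects onto $\langle z\rangle$) and subtracting a suitable multiple of $v_0$, I can arrange $w=z+\lambda f_0$, whence $\pi(M)=\langle z+\lambda f_0,\ e_0+\mu f_0\rangle$; lifting $w$ and $v_0$ to $b_1,b_2\in M$ and adjoining a basis $b_3,\dots,b_d$ of $M\cap U$ (dimension $d-2$) gives the second form. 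If instead $v_0$ has zero $e_0$-coordinate then $v_0\in\langle f_0\rangle$, and the identical manoeuvre produces $\pi(M)=\langle z+\lambda e_0,\ f_0\rangle$, i.e.\ the third form.

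All the manipulations here are routine linear algebra; the only points that need genuine care are the bookkeeping that forces $k\in\{2,3\}$ — in particular using the Witt index of $U$ to exclude $k\le 1$ — and checking that the sub-case analysis on the line $\pi(M)\cap\langle e_0,f_0\rangle$ is exhaustive and corresponds precisely to the two remaining shapes in the statement. One can note in passing that when $d=2$ the value $k=3$ cannot occur, consistently with the first form being vacuous in that case, but this needs no separate argument.
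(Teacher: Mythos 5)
Your proof is correct and follows essentially the same route as the paper's: both rest on the Witt-index facts that $M\not\subseteq\langle z\rangle^\perp$ and that the totally singular subspace $M\cap U$ of $U$ has dimension at most $d-2$, followed by Gaussian elimination on a basis adapted to the decomposition $V=W\perp U$. Your formulation via $k=\dim\pi(M)\in\{2,3\}$ is a clean repackaging of the paper's two successive applications of Lemma~\ref{lem:NonEmptyProjection}, not a genuinely different argument.
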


\begin{proof}
Let $b_1, b_2, \ldots, b_d$ be a basis of $M$. By Lemma \ref{lem:NonEmptyProjection} $M$ projects onto $\langle z \rangle$, and so without loss of generality, we may take $b_1$ to project non-trivially onto $\langle z \rangle$. Since $\dim(\langle z \rangle)=1$, there exists a basis $b_1, b_2', b_3', \ldots, b_d'$ of $M$, where each $b_i'$ projects trivially onto $\langle z \rangle$ for $i \in \{2, \ldots, d \}$. Such a basis can be obtained by taking linear combinations of $b_1$ from $b_i$. 

Now consider $M' = \langle b_2', \ldots, b_d' \}$, a $(d-1)$-dimensional totally isotropic subspace of $\langle z \rangle^\perp$. By Lemma \ref{lem:NonEmptyProjection} again, $M'$ projects non-trivially onto $\langle e_0, f_0 \rangle$. Without loss of generality, we may assume $b_2'$ has a non-empty projection to $\langle e_0, f_0 \rangle$. Since $\dim(\langle e_0, f_0 \rangle)=2$, we then have two cases: either there is some other basis vector, which we may take to be $b_3'$, such that $b_3'$ projects non-trivially onto $\langle e_0, f_0 \rangle$ and the projections of $b_2'$ and $b_3'$ onto $\langle e_0, f_0 \rangle$ are linearly independent, or else $b_2'$ is the only such vector. By taking linear combinations we may then manipulate the bases into the desired form.
\end{proof}

\begin{lemma}\label{lem:taudoubles}
	Let $M$ be a maximal. There does not exist $g \in B$ such that $M^g = M^\tau$.
\end{lemma}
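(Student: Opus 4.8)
The plan is to recast the claim as a statement about point stabilisers, reduce a hypothetical stabilising element to a reflection of the parabolic plane $W$, and then obtain a contradiction from the isometry type of an associated perp. First, the statement is equivalent to $A_M \le B$, where $A_M$ is the stabiliser of $M$ in $A$: if $M^g = M^\tau$ for some $g\in B$, then since $\tau^2 = 1$ the element $h := g\tau$ fixes $M$, and since $B \trianglelefteq A$ (Lemma~\ref{lem:BnormalinA}) with $\tau\notin B$ we get $h\in A\setminus B$; conversely, given $h\in A_M\setminus B$, put $g := h\tau$, which lies in $B$ (as $h,\tau\in A\setminus B$), and then $M^g = (M^h)^\tau = M^\tau$. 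Now recall that $\Omega(W)\le A\le\mathrm{O}(W)$, that $\tau$ induces on $W$ the reflection $r_z$ in $z$, and hence that $A = \Omega(W)\sqcup\Omega(W)r_z$; so $A\setminus B$ consists exactly of the elements $h$ whose induced map $\gamma := h|_W$ on $W$ has $\det\gamma = -1$. Thus it suffices to show that no $h = \gamma\oplus 1_U\in A$ with $\det\gamma = -1$ fixes $M$.

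Suppose such an $h$ fixes $M$. Writing $P := \pi_W(M)$ for the projection of $M$ onto $W$ along $U$, and using that $h$ fixes $U$ pointwise, one checks that $\gamma$ stabilises $P$ and that $(\gamma - 1)(P)\subseteq M\cap W$. Here $M\cap W$ is totally singular in the rank-$1$ parabolic plane $W$, so $\dim(M\cap W)\le 1$ and $M\cap W$ is spanned by a singular vector when nonzero; and $M\cap U$ is totally singular in the elliptic space $U$, so $\dim(M\cap U)\le d-2$ and therefore $\dim P\ge 2$. A case analysis on $\dim P\in\{2,3\}$ and on the (non)degeneracy of $P$ and of $P^\perp\cap W$ — using that every determinant $-1$ element of $\mathrm{O}_2(q)$, and every determinant $-1$ element of $\mathrm{O}_3(q)$ with a $2$-dimensional fixed subspace, is a reflection $r_v$ in a \emph{non-singular} vector — shows that in every branch one is either forced to place a non-singular vector into the totally singular space $M\cap W$ (impossible), or else $\gamma = r_v$ for a non-singular $v$ with $P = v^\perp\cap W$ (so that $(\gamma-1)(P) = 0$).

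It remains to rule out this last possibility. From $\pi_W(M)^\perp = M^\perp\cap W$ (immediate from $\pi_W(M) = (M+U)\cap W$ and $W^\perp = U$) and $P\subseteq v^\perp$ we get $v\in M^\perp$, hence $M\subseteq v^\perp$ inside $V$. The space $v^\perp$ is nondegenerate of dimension $2d$ and contains the totally singular $d$-space $M$, so it has Witt index $d$, i.e. is of hyperbolic type; whereas $z^\perp = \langle e_0,f_0\rangle\perp U$ is hyperbolic $\perp$ elliptic, hence of elliptic type. Since $V$ is parabolic, the isometry type of the perp of a non-singular vector $w$ depends only on the square class of $\kappa(w)$, so $\kappa(v)$ and $\kappa(z) = \frac12$ lie in different square classes; equivalently $2\kappa(v)$ is a non-square. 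But then the spinor norm of $r_v r_z$ equals $\kappa(v)\kappa(z)\equiv 2\kappa(v)\pmod{(\F_q^*)^2}$, which is non-trivial, so $r_v r_z\notin\Omega(W)$ and hence $\gamma = r_v\notin\Omega(W)\sqcup\Omega(W)r_z = A$ — contradicting $h\in A$. Therefore $A_M\le B$, which proves the lemma.

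I expect the decisive step to be the type dichotomy in the final paragraph: isolating the fact that in the odd-dimensional parabolic space the two square classes of non-singular vectors correspond precisely to the hyperbolic and elliptic types of their perps, and then translating ``$v^\perp$ hyperbolic while $z^\perp$ elliptic'' into ``$r_v\notin A$'' via the spinor norm. A secondary point needing care is the classification of determinant $-1$ elements of $\mathrm{O}_3(q)$ used in the middle paragraph (in particular excluding $\gamma = -1_W$ and the cases of a degenerate or $1$-dimensional fixed subspace). A more computational alternative, closer in flavour to the preceding lemmas, would be to carry out that middle step case-by-case through the three normal forms for $M$ provided by Lemma~\ref{lem:Basis}, applying Lemma~\ref{ReflectionTypes} to the induced action on the relevant sets $V_\alpha$.
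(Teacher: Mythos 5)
Your argument is correct in outline but takes a genuinely different route from the paper's. The paper works concretely: it puts a basis of $M$ into one of the normal forms of Lemma~\ref{lem:Basis}, shows that any $g \in B$ with $M^g = M^\tau$ would have to replicate the action of $\tau$ on two or three explicit vectors of $W$ (using linear independence of the $U$-components, which rests on the Witt index of $U$), and then eliminates each case by hand --- via the explicit matrices of Lemma~\ref{lem:OmegaElements} when the relevant stabiliser is $\mathrm{O}_2^+(q)$, and via Lemma~\ref{ReflectionTypes} together with the claim $\lambda^2 - 2\mu \notin \square$ (again the Witt index of $U$) in the hardest subcase. You instead reformulate the statement as $A_M \le B$, identify $A \setminus B$ as the elements inducing determinant $-1$ on $W$, force such an element fixing $M$ to be a reflection $r_v$ with $M \subseteq v^\perp$, and conclude from ``$v^\perp$ hyperbolic, $z^\perp$ elliptic'' plus the spinor norm that $r_v \notin A$. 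The two proofs are morally close --- your Witt-index argument that $v^\perp$ is hyperbolic plays exactly the role of the paper's Case~2c claim, and the spinor-norm step is a coordinate-free version of Lemma~\ref{ReflectionTypes} --- but yours is uniform over all maximals (no case split on the shape of $M$) at the cost of importing the spinor norm and the discriminant criterion for the type of $w^\perp$, neither of which the paper sets up. One detail you must write out: the branch with $\dim \pi_W(M) = 2$ and $(\gamma - 1)(\pi_W(M)) \neq 0$ does not land in either horn of your stated dichotomy, since there the fixed space of $\gamma$ need not be $2$-dimensional and $\gamma$ restricted to $P := \pi_W(M)$ is not an element of an $\mathrm{O}_2(q)$ of a nondegenerate plane. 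In that branch $M \cap W = \langle s \rangle$ is singular and equals the radical of the degenerate plane $P = s^\perp \cap W$; choosing a basis $s, a, f$ of $W$ adapted to $\langle s \rangle \le P \le W$, the conditions $\gamma(P) = P$ and $(\gamma-1)(P) \subseteq \langle s \rangle$ make $\gamma$ triangular with diagonal $(\lambda, 1, \delta)$ and $\beta(\gamma s, \gamma f) = \beta(s,f)$ forces $\lambda\delta = 1$, so $\det\gamma = 1$, a contradiction --- but by a determinant computation rather than by producing a non-singular vector in $M \cap W$. With that branch filled in, your proof is complete.
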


\begin{proof}

Let $\{ b_1, b_2, \ldots, b_d \}$ be a basis for the totally singular subspace $M$. Without loss of generality, we need only consider the first two bases of Lemma \ref{lem:Basis}, since the argument for the third basis is identical to that of the second.
Now, $\tau$ fixes each basis vector other than $b_1$, and for any $g$ in $B$, $g$ fixes each vector in $U$. Hence $M^g = M^\tau$ if and only if $\langle b_1, b_2, b_3 \rangle^g = \langle b_1^\tau, b_2, b_3 \rangle$ in the first case, or $\langle b_1, b_2 \rangle^g  = \langle b_1^\tau, b_2  \rangle$ in the second case.
Throughout, we recall that $\kappa(b_i)=0$ and $\beta(b_i, b_j) = 0$, since $M$ is totally singular.
We now consider each case. 

{\it Case 1:} Consider $M$ with first three basis vectors
\[
b_1 = z + u_1, \quad b_2 = e_0 + u_2, \quad b_3 = f_0 + u_3
\]
for $u_1, u_2, u_3 \in U$.

Observe that $\kappa(b_1) = 0$, $\kappa(b_3) = 0$, and $\beta(b_1, b_3) = 0$ imply that $\beta(u_1, u_1) = -1$, $\kappa(u_3)=0$, and $\beta(u_1, u_3)=0$, respectively.

We first show that $u_2 \not \in \langle u_1, u_3 \rangle$. Consider for a contradiction that $u_2 = \gamma_1 u_1 + \gamma_2 u_3$. Then $2\kappa(u_2) = \beta(\gamma_1 u_1 + \gamma_2 u_3, \gamma_1 u_1 + \gamma_2 u_3) = -\gamma_1^2$. However $\kappa(b_2)=0$ implies that $\kappa(u_2)=0$, and thus $\gamma_1=0$ and $u_2 = \gamma_2 u_3$. Now, from $\beta(b_2, b_3)=0$ it follows that $\beta(u_2, u_3)=-1$ and therefore $\beta(\gamma_2 u_3 , u_3) = 2\gamma_2 \kappa(u_3) = -1$. However $\kappa(u_3) = 0$, a contradiction. So $u_2 \not \in \langle u_1, u_3 \rangle$.

Moreover, since $\kappa(u_1)=-\frac{1}{2}$ and $\kappa(u_3)=0$, it follows that $u_1 \neq \gamma u_3$. Hence $u_1, u_2$, and $u_3$ are linearly independent.

Let $g \in B$ such that
$b_1^g, b_2^g, b_3^g \in \langle b_1^\tau, b_2, b_3 \rangle$.
Then
\begin{align*}
b_1^g = z^g + u_1^g = z^g + u_1,
\end{align*}
and
\begin{align*}
b_1^g &= \gamma_1b_1^\tau + \gamma_2 b_2 + \gamma_3 b_3\\
&= -\gamma_1 z + \gamma_1 u_1 + \gamma_2 e_0 + \gamma_2 u_2 + \gamma_3 f_0 + \gamma_3 u_3.
\end{align*}
However, since $u_1, u_2$, and $u_3$ are linearly independent, $\gamma_2 = \gamma_3 = 0$ and $\gamma_1=1$. Hence $ b_1^g = -z + u_1 = b_1^\tau$.
Moreover, $b_2^g = b_2$ and $b_3^g = b_3$ by the same argument.
Thus
\[
z^g = -z, \quad
e_0^g  = e_0, \quad
f_0^g = f_0.
\]
From this it follows that $g =\tau$. However $ \tau \not \in  B$, so there is no such element $g \in B$ such that $M^g = M^\tau$.
\\

{\it Case 2:} Consider $M$ with first two basis vectors
\[
b_1 = z + \lambda f_0 + u_1, \quad b_2 = e_0 + \mu f_0 + u_2.
\]
Recall that $b_3 \in U$. From $\kappa(b_1)= 0$ if follows that $\kappa(u_1)=-\frac{1}{2}$, and since $\kappa(b_3) = 0$ it follows that $u_1$ and $b_3$ are linearly independent vectors in $U$.
Consider now $u_1 = \gamma u_2$. Since $u_1$ and $b_3$ are linearly independent, so too must $u_2$ and $b_3$ be linearly independent. Thus we may take a basis $b_1', b_2, b_3, \ldots, b_d$ of $M$, where $b_1' = b_1 + b_3 = z + \lambda f_0 + u_1'$ and $u_1' = u_1 + b_3$. We now have $u_1'$ is linearly independent of $u_2$, so without loss of generality, we may assume $u_1 \neq \gamma u_2$.

Let $g \in B$ such that $b_1^g, b_2^g \in \langle b_1^\tau, b_2 \rangle$. Then
\begin{align*}
b_1^g &= z^g + \lambda f_0^g + u_1^g = z^g + \lambda f_0^g + u_1,\\
b_2^g &= e_0^g + \mu f_0^g + u_2^g  = e_0^g + \mu f_0^g + u_2,
\end{align*}
and
\begin{align*}
b_1^g &= \gamma_1 b_1^\tau + \gamma_2 b_2 = \gamma_1 (-z + \lambda f_0 + u_1) + \gamma_2 (e_0 + \mu f_0 + u_2),\\
b_2^g &= \gamma_3 b_1^\tau + \gamma_4 b_2 = \gamma_3 (-z + \lambda f_0 + u_1) + \gamma_4 (e_0 + \mu f_0 + u_2).
\end{align*}
Since $u_1$ and $u_2$ are linearly independent, it follows that $\gamma_1 = \gamma_4 = 1$ and $\gamma_2 = \gamma_3 = 0	$. That is, $b_2^g = b_2$ and $b_1^g = b_1^\tau$. From this it follows, that,
\[
(z + \lambda f_0)^{g} = -z + \lambda f_0, \quad (e_0 + \mu f_0)^{g} = e_0 + \mu f_0.
\]

Here we have three subcases: $\mu = 0$, $2 \mu \in \square \backslash \{0\}$, and $2 \mu \not \in \square$, where $\square$ is the set of squares of $\mathbb{F}_q$.\\

{\it Case 2a:} Consider $\mu = 0$. Then $\kappa(b_2)=0$ implies $\kappa(u_2) = 0$. Moreover, $\beta(u_2, b_i)=0$ for $3 \le i \le d$. Observe, $b_3, \ldots, b_d$ are linearly independent, since they are basis vectors, and they are all contained in $U$. Since $U$ is of elliptic type, it has Witt index $d-2$, and hence $u_2$ is either a linear combination of $b_3, \ldots, b_d $ or $u_2=0$. In either case, $\beta(u_1, u_2)=0$, since $\beta(u_1, b_i)=0$ for $3 \le i \le d$ and $\beta(u_1, 0)=0$. However, $\beta(b_1, b_2)= 0$ and so $\beta(u_1, u_2)= -\lambda$, thus $\lambda =0$.

Now, we require $g$ such that $z^{g} = -z$ and $e_0^{g}=e_0$. It follows then that $g$ fixes $\langle z \rangle$ and hence $g \in B_{\langle z \rangle}$, where $1 \times \mathrm{\Omega}_2^+(q) \le B_{\langle z \rangle} \le \mathrm{O}_1^\circ(q) \times \mathrm{O}_2^+(q)$  by Lemma \ref{lem:StabiliserOfNonDegenVector}, since $\kappa(z) \neq 0$ and $\langle e_0, f_0 \rangle$ is of hyperbolic type. Moreover, $ g = (-1, h)$, where $h \in \mathrm{O}_2^+(q)$, since  $z^{ g} = -z$. However, $ g \in \mathrm{\Omega}^\circ_3(q) \le \mathrm{SO}^\circ_3(q)$, and hence $\det( g)=1$, meaning $\det(h) = -1$. By Lemma \ref{lem:OmegaElements}, $h$ must then be an element 
of the form 
\[
\begin{pmatrix} 0 & \gamma \\ \gamma^{-1} & 0 \end{pmatrix},
\]
however no such element fixes $e_0$.
Thus there is no such $ g \in  B$.\\

In the remaining two subcases where $\mu \neq 0$,
let $v = e_0 + \mu f_0$, and $\mathcal{C} = \{v, z, e_0 - \mu f_0 \}$ be a basis for $W$. Further let $W_1 = \langle v \rangle$ and $W_2 = \langle z, e_0 - \mu f_0 \rangle$.
Expressed with respect to $\mathcal{C}$, 
$z + \lambda f_0 = z -  \frac{\lambda}{2 \mu} (e_0 - \mu f_0) +  \frac{\lambda}{2 \mu} v$.
However $v^{ g} = v$, and so  $( z -  \frac{\lambda}{2 \mu} (e_0 - \mu f_0) )^{ g} = -z -  \frac{\lambda}{2 \mu} (e_0 - \mu f_0)$.
Note $g$ fixes $v$, hence $g \in B_v$. Moreover $\kappa(v) = \mu \neq 0$, so by Lemma \ref{lem:StabiliserOfNonDegenVector}, $B_v = \mathrm{\Omega}(W_2) \cong \mathrm{\Omega}_2^\epsilon(q)$.
\\

{\it Case 2b:} Consider $2 \mu \in \square \backslash \{0\}$. Then there exists $\gamma \neq 0$ such that $\gamma^2 = 2 \mu$.
Let $w = \gamma z + (e_0 - \mu f_0)$. Then $\kappa(w)=0$ and hence $W_2$ is hyperbolic, meaning $B_v \cong \mathrm{O}_2^+(q)$. Thus we require $ g = (1, h)$, where $h \in \mathrm{O}_2^+(q)$, such that  $(z - \frac{\lambda}{2\mu}(e_0 - \mu f_0) )^h = -z - \frac{\lambda}{2\mu}(e_0 - \mu f_0)$.
By Lemma \ref{lem:OmegaElements} $h$ has the form $\text{Diag}(\zeta^2, \zeta^{-2})$.
Now, $- \frac{\lambda}{2\mu}(e_0 - \mu f_0)^h = - \frac{\lambda}{2\mu}(e_0 - \mu f_0)$ implies $h =  \text{Diag}(1,1)$. However, $z^h =-z$ implies $h =  \text{Diag}(-1,-1)$, a contradiction.  
Thus there is no such $g \in B$ which replicates the action of $\tau$ on $M$.
\\

{\it Case 2c:} Consider $2 \mu \not \in \square$. Recall that $\mathrm{O}^\epsilon_2(q)$ is dihedral and $\mathrm{\Omega}^\epsilon_2(q)$ is cyclic, by Lemma \ref{lem:isomorphisms}.
With respect to $\mathcal{C}$, $\tau$ remains unchanged, and induces the element $\tau_{W_2}  = \text{Diag}(-1, 1) \in \mathrm{O}(W_2) \cong \mathrm{O}^\epsilon_2(q)$ when restricted to $W_2$. Clearly $\det(\tau_{W_2})=-1$ and hence $\tau_{W_2} \not \in \mathrm{\Omega}(W_2) = B_v$. Note that the determinant is unaffected by a change of basis of $W_2$. Moreover $\tau_{W_2}$ is not an element of $\mathrm{SO}(W_2)$ and is thus a reflection in the dihedral group $\mathrm{O}(W_2)$.
Observe that $\tau_{W_2}$ 
fixes only elements of the form $\gamma (e_0 - \mu f_0)$. Now, $\kappa(\gamma (e_0 - \mu f_0)) = -\mu\gamma^2  $, and so $ \tau_{W_2}$ fixes an element of $V_\alpha$ for $\alpha \in - \mu \square$ and is fixed-point-free otherwise.
Moreover $\lambda^2 \neq 2\mu$,
hence $\kappa(z - \frac{\lambda}{2\mu}(e_0 - \mu f_0)) \neq 0$ and so $z - \frac{\lambda}{2\mu}(e_0 - \mu f_0) \not \in V_0$.
Thus if $\kappa(z- \frac{\lambda}{2 \mu}(e_0 - \mu f_0)) = \frac{1}{2} + (\frac{\lambda}{2 \mu})^2(-\mu) \not  \in -\mu \square$, then  by Lemma \ref{ReflectionTypes},
$\tau_{W_2}$ interchanges the orbits of $\mathrm{\Omega}(W_2)$ on $V_{\frac{1}{2} + (\frac{\lambda}{2 \mu})^2(-\mu)}$, meaning $z- \frac{\lambda}{2 \mu}(e_0 - \mu f_0)$ and $-z- \frac{\lambda}{2 \mu}(e_0 - \mu f_0)$ lie in different orbits of $\mathrm{\Omega}(W_2) = B_v$.

We claim that indeed $\kappa(z- \frac{\lambda}{2 \mu}(e_0 - \mu f_0)) 
\not  \in -\mu \square$. To prove the claim, observe that 
$\frac{1}{2} + (\frac{\lambda}{2 \mu})^2(-\mu) \not \in - \mu \square$
if and only if
$\lambda^2 -2 \mu \not \in \square$.
Consider for a contradiction that $\lambda^2 -2 \mu \in \square$. Then there exists $\gamma$ such that $\kappa(\gamma u_1 + u_2)=0$, since 
$\kappa(\gamma u_1 + u_2)
= \gamma^2 \kappa(u_1) +  \gamma \beta(u_1, u_2) + \kappa(u_2)
= -\frac{\gamma^2}{2} - \gamma \lambda -\mu
$ has discriminant $\lambda^2 - 2 \mu$.
Hence $\langle \gamma u_1 + u_2, u_3, \ldots, u_d \rangle$ is a totally isotropic subspace in $U$ of dimension $d-1$. However, as $U$ is elliptic, its Witt index is $d-2$, a contradiction, thus proving the claim.
It follows then that there is no such $g \in B$ which acts on $M$ the same way as $\tau$.
\\

After considering each case, we see that there is no element $g \in B$ such that $M^g = M^\tau$.
\end{proof}

As a result of Lemma \ref{lem:taudoubles} the image of $\tau$ on each element in an orbit of $B$ on maximals is outside of the orbit, and since 
$B$ is index $2$ in $A$, this results in $B$ doubling each orbit of $A$. This gives the immediate corollary:

\begin{corollary}\label{cor:doubleorbits}
Each orbit of $A$ on maximals splits into two orbits of $B$ on maximals.
\end{corollary}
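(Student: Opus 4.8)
The plan is to deduce the corollary formally from Lemma~\ref{lem:taudoubles} together with the structural facts about $A$ and $B$ already in hand. The essential point is that $B$ is normal of index two in $A$, so each $A$-orbit on maximals is either a single $B$-orbit or a union of two $B$-orbits, and Lemma~\ref{lem:taudoubles} rules out the first possibility. Thus no new geometry is needed; the argument is a short orbit-counting deduction.

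First I would pin down that $|A : B| = 2$. We have $A = \langle B, \tau \rangle$ and $B \trianglelefteq A$ by Lemma~\ref{lem:BnormalinA}, so $A/B$ is generated by the coset $B\tau$. Since $\tau^2 = I$ (immediate from the block form \eqref{tau}), this coset has order dividing $2$, and since $\det(\tau) = -1$ forces $\tau \notin B = \Omega(W)$, the coset is nontrivial; hence $A/B \cong \mathbb{Z}_2$ and $A = B \cup B\tau$. Next, fix a maximal $M$ and let $\mathcal{O} = M^{B}$ be its $B$-orbit. Using $A = B \cup B\tau$ and $B\tau = \tau B$ (normality), the $A$-orbit of $M$ is
\[
M^{A} = M^{B} \cup M^{B\tau} = \mathcal{O} \cup (M^{B})^{\tau} = \mathcal{O} \cup \mathcal{O}^{\tau},
\]
and $\mathcal{O}^{\tau}$ is again a $B$-orbit because $\tau^{-1} B \tau = B$. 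By Lemma~\ref{lem:taudoubles} there is no $g \in B$ with $M^{g} = M^{\tau}$, i.e.\ $M^{\tau} \notin \mathcal{O}$, so $\mathcal{O} \neq \mathcal{O}^{\tau}$; as distinct $B$-orbits are disjoint, $M^{A} = \mathcal{O} \sqcup \mathcal{O}^{\tau}$ is a disjoint union of exactly two $B$-orbits. Since $M$ was arbitrary, every $A$-orbit on maximals splits into two $B$-orbits.

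There is essentially no obstacle remaining at this stage: all the real work was done in Lemma~\ref{lem:taudoubles}, whose case analysis guarantees $M^{\tau}$ always leaves the $B$-orbit of $M$. The only point requiring a moment's care is the claim $|A:B|=2$ (as opposed to $4$, which $|\mathrm{O}(W):\Omega(W)|$ might suggest), and this is settled by the observation above that $A/B$ is generated by a single involution outside $B$. One could also phrase the whole argument as the standard fact that for $B \trianglelefteq A$ with $A/B$ cyclic of prime order $p$, every $A$-orbit breaks into either $1$ or $p$ orbits of $B$, with the number of $A$-orbits on maximals being the parameter $m$ appearing in the AB-Lemma.
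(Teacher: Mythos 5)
Your proposal is correct and follows essentially the same route as the paper, which deduces the corollary from Lemma~\ref{lem:taudoubles} together with the fact that $B$ has index $2$ in $A$; you have merely written out the standard orbit decomposition $M^{A} = M^{B} \sqcup (M^{B})^{\tau}$ in full detail. The extra care you take in verifying $|A:B| = 2$ and that $\tau$ normalises $B$ is sound and fills in steps the paper leaves implicit.
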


The proof of Theorem \ref{main} follows directly from Lemma \ref{lem:BnormalinA}, Lemma \ref{lem:pointsfixed}, Corollary \ref{cor:doubleorbits} and the application of the $AB$-Lemma, Lemma \ref{lem:AB}.

\section{Concluding Remarks}
In \cite[Remark 2.12]{COSSIDENTE2016112}, a sporadic example of a hemisystem of $\mathcal{Q}(6,3)$ is given which bares similarities to the construction in this paper. The authors provide a similar decomposition of the ambient space into a conic and its perp (of elliptic type). A subgroup isomorphic to $A_5$ is then found by computer in the pointwise stabiliser of the conic which fixes a hemisystem. It is not known if the construction in \cite{COSSIDENTE2016112} generalises to all $q$, or to greater rank, and so the authors describe it as sporadic.
Our construction instead finds a subgroup in the pointwise stabiliser of the perp of the conic which is isomorphic to $\mathrm{\Omega}_3(3) \cong A_4$, in the case $q=3$, and generalises to $\mathrm{\Omega}_3(q)$ for all odd $q$ and rank at least $2$.

We conclude with a more geometric description of the construction given in this paper. Let $p$ be a point of $PG(2d, q)$ such that $p^\perp$ is of elliptic type. Then $\tau$ is the unique involution which fixes $p^\perp$ point-wise. Let $\ell$ be any line of $p^\perp$ of hyperbolic type. Then $B$ is the derived subgroup of the point-wise stabiliser of $\langle p, \ell \rangle^\perp$. It is clear that $\langle p, \ell \rangle$ as a vector subspace corresponds to $W$ in Section 3.

\bibliographystyle{abbrv}
\bibliography{references}

\end{document}